\newcommand{\CB}{{\mathcal {B}}}
\newcommand{\CZ}{{\mathcal {Z}}}
\newcommand{\RN}{{\mathrm {N}}}
\newcommand{\Ad}{{\mathrm{Ad}}}
\newcommand{\GL}{{\mathrm{GL}}}
\newcommand{\Hom}{{\mathrm{Hom}}}
\newcommand{\Ind}{{\mathrm{Ind}}}
\newcommand{\Lie}{{\mathrm{Lie}}}
\newcommand{\SL}{{\mathrm{SL}}}
\newcommand{\SO}{{\mathrm{SO}}}
\newcommand{\SU}{{\mathrm{SU}}}
\newcommand{\sgn}{{\mathrm{sgn}}}
\newcommand{\Sp}{{\mathrm{Sp}}}
\newcommand{\cf}{\emph{cf.}~}
\newcommand{\con}{\textit{C}}
\newcommand{\od}{\operatorname{d}}
\newcommand{\oL}{\operatorname{L}}
\newcommand{\oH}{\operatorname{H}}
\newcommand{\oO}{\operatorname{O}}
\newcommand{\oS}{\operatorname{S}}
\newcommand{\oR}{\operatorname{R}}
\newcommand{\oU}{\operatorname{U}}
\newcommand{\g}{\mathfrak g}
\renewcommand{\k}{\mathfrak k}
\newcommand{\h}{\mathfrak h}
\newcommand{\q}{\mathfrak q}
\renewcommand{\b}{\mathfrak b}
\renewcommand{\c}{\mathfrak c}
\newcommand{\n}{\mathfrak n}
\renewcommand{\l}{\mathfrak l}
\newcommand{\m}{\mathfrak m}
\renewcommand{\sl}{\mathfrak s \mathfrak l}
\newcommand{\gl}{\mathfrak g \mathfrak l}
\newcommand{\Z}{\mathbb{Z}}
\newcommand{\C}{\mathbb{C}}
\newcommand{\R}{\mathbb R}
\newcommand{\K}{\mathbb{K}}
\newcommand{\abs}[1]{\lvert#1\rvert}
\newcommand{\la}{\langle}
\newcommand{\ra}{\rangle}
\newcommand{\be}{\begin {equation}}
\newcommand{\ee}{\end {equation}}
\newcommand{\bee}{\begin {equation*}}
\newcommand{\eee}{\end {equation*}}
\theoremstyle{Theorem}
\theoremstyle{Theorem}
\newtheorem{thm}{Theorem}[section]
\newtheorem{lemt}[thm]{Lemma}
\newtheorem{prpt}[thm]{Proposition}
\theoremstyle{Theorem}
\newtheorem{lem}{Lemma}[section]
\newtheorem{thml}[lem]{Theorem}
\theoremstyle{Theorem}
\theoremstyle{remark}
\newtheorem*{remark}{Remark}
\newtheorem*{remarks}{Remarks}
\theoremstyle{Definition}
\theoremstyle{remark}
\newtheorem{examplet}[thm]{Example}
\newtheorem*{acknowledgements}{Acknowledgements}
\begin{document}

\title[Distinguished  representations]{Cohomologically induced distinguished  representations and cohomological test vectors}

\author[B. Sun]{Binyong Sun}
\address{HCMS, HLM, CEMS, Academy of Mathematics and Systems Science, Chinese Academy of Sciences \& School of Mathematical Sciences, University of Chinese Academy of Sciences, Beijing 100190, China
} \email{sun@math.ac.cn}

\subjclass[2000]{22E46, 22E41} \keywords{real reductive group,
distinguished representation, cohomological induction, L-function, special value}


\begin{abstract}
Let $G$ be a real reductive group, and let $\chi$ be a character of a reductive subgroup $H$ of $G$.  We construct $\chi$-invariant linear functionals on certain cohomologically induced representations of $G$, and show that these linear functionals do not vanish on the bottom layers. Applying this construction, we prove two Archimedean non-vanishing hypotheses, which are vital to the arithmetic study of special values of certain L-functions via modular symbols.
\end{abstract}

 \maketitle


\section{Introduction}

\subsection{Distinguished representations}\label{genrality}
Let $G$ be a real reductive group, namely, it is a Lie group with
the following properties:
\begin{enumerate}
       \item[$\bullet$]
        $\g$ is reductive;
                   \item[$\bullet$]
  $G$ has only finitely many connected components;
         \item[$\bullet$]
            there is a connected closed subgroup of $G$ with
  finite center whose complexified Lie algebra equals  $[\g,\g]$.
\end{enumerate}
Here and henceforth, we use the corresponding lower case Gothic letter to indicate the complexified
Lie algebra of a Lie group. In particular, $\g$ denotes the complexified Lie algebra of $G$.
For applications to the theory of automorphic forms, we are interested in Casselman-Wallach representations of $G$. Recall that a (complex)
representation of a real reductive group is said to be
Casselman-Wallach if it is Fr\'{e}chet, smooth, of moderate growth,
and its underlying Harish-Chandra module is admissible and finitely
generated.  The reader may consult \cite{Cass}, \cite[Chapter 11]{W2} or \cite{BK} for more details about Casselman-Wallach
representations.  To ease notation, we do not
distinguish a representation with its underlying vector space, or a character of a Lie group with its corresponding one-dimensional representation.

Let $H$ be a closed subgroup of $G$, and let  $\chi: H\rightarrow \C^\times$ be a character. By a $\chi$-distinguished representation of $G$, we mean a Casselman-Wallach representation $\pi$ of $G$, together with an $H$-equivariant continuous linear functional $\varphi: \pi\rightarrow \chi$. Distinguished representations are ubiquitous in representation theory and in the theory of automorphic forms.

Fix a Cartan involution $\theta$ of $G$.
From now on we assume that $H$ has only finitely many connected components, and that $\theta(H)=H$.  Then $H$ is also a real reductive group, and $\theta$ restricts to a Cartan involution of $H$. Write
\[
  K:=G^\theta \quad (\textrm{the  fixed point group})\quad \textrm{and}\quad H_{\mathrm c}:=H\cap K,
\]
which are respectively  maximal compact subgroups of $G$ and $H$.

Recall that Casselman-Wallach globalizations establish an equivalence between the category of finitely generated admissible $(\g,K)$-modules and the category of Casselman-Wallach representations of $G$. In particular, every Casselman-Wallach representation $\pi$ of $G$ equals the Casselman-Wallach globalization of its  underlying $(\g,K)$-module $\pi_{[K]}$.
The
restriction induces an injective linear map
\be\label{autinj}
  \Hom_H(\pi,\chi)\hookrightarrow\Hom_{\h,H_{\mathrm c}}(\pi_{[K]},\chi).
\ee

We say that the quadruple $(G, \theta, H, \chi)$ has the automatic continuity property if the map \eqref{autinj} is surjective for all Casselman-Wallach representations $\pi$ of $G$. At least when this is the case, one may study $\chi$-distinguished representations in the purely algebraic setting of $(\g,K)$-modules. The reader is referred to \cite{BK} for  more discussions on the automatic continuity property. It holds at least for symmetric subgroups as in the following theorem.

\begin{thm}\label{autcon0} (\cite[Theorem 1]{BaD} and \cite[Theorem 1]{BrD})
If there is an involutive automorphism $\sigma$ of $G$ which commutes with $\theta$ such that $H$ is an open subgroup of $G^\sigma$, then $(G, \theta, H, \chi)$ has the automatic continuity property.
\end{thm}

\begin{remark}
The proof of Theorem \ref{autcon0} by van den Ban-Delorme
 and Brylinski-Delorme is carried out for trivial $\chi$, but the same proof works in general. The author thanks Patrick Delorme for confirming this.
\end{remark}

\begin{examplet}\label{exm11}
Let $(G, H):=(\GL_{2n}(\R), \Sp_{2n}(\R))$ ($n\geq 1$). For every $g\in G$, let
\be\label{thetaint}
 \theta(g):= g^{-\mathrm t} \qquad \textrm{ (the inverse transpose)}
\ee
and
\[
   \sigma(g):=\left[
  \begin{array}{cc}
   0&1_n \\
   -1_n  &  0 \\
  \end{array}
\right] g^{-\mathrm t} \left[
  \begin{array}{cc}
    0&-1_n \\
   1_n  & 0  \\
      \end{array} \right]\quad (\textrm{$1_n$ is the identity matrix of size $n$}).
\]
Then $\theta$ is a Cartan involution of $G$ stabilizing $H$, and $\sigma$ is an involutive automorphism of $G$ which commutes with $\theta$ such that $G^\sigma=H$.

\end{examplet}

The original motivation of this paper is to settle two
non-vanishing hypotheses which are vital to the arithmetic study of special values of certain L-functions (Theorems \ref{nv1} and \ref{nv2} in this paper).
For this purpose, we will also consider the following two examples of  $(G,H, \theta, \sigma)$.

\begin{examplet}\label{exm12}

Let $(G, H):=(\GL_n(\K), \GL_n(\R))$ ($n\geq 1$), where  $\K$ is a topological field which is topologically isomorphic to $\C$. For every $g\in G$, let
 \[
 \theta(g):=\bar g^{-\mathrm t} \qquad \textrm{ (the inverse of the conjugate transpose)}
\]
and
\[
\sigma(g):=\bar g\qquad\textrm{ (the complex conjugate)}.
\]

\end{examplet}

\begin{examplet}\label{exm13}
 Let $(G, H):=(\GL_{2n}(\R), \GL_n(\R)\times \GL_n(\R))$ ($n\geq 1$).
For every $g\in G$, let  $\theta(g)$ be as in \eqref{thetaint}, and
\[
   \sigma(g):=\left[
  \begin{array}{cc}
   1_n&0 \\
     0& -1_n  \\
  \end{array}
\right] g \left[
  \begin{array}{cc}
   1_n&0 \\
     0& -1_n  \\
      \end{array} \right].
\]
\end{examplet}

Like Example \ref{exm11}, in the above two examples,  $\theta$ is a Cartan involution of $G$ stabilizing $H$, and $\sigma$ is  an involutive automorphism of $G$ which commutes with $\theta$ such that $G^\sigma=H$.
Hence  Theorem \ref{autcon0} applies to all of the above three examples. We remark that in all these three examples, $(G,H)$ is a Gelfand pair, namely (see \cite{AGS, AiS})
\[
  \dim \Hom_H( \pi, \C)\leq 1\quad (\textrm{$\C$ stands for the trivial representation})
\]
for every irreducible Casselman-Wallach representation $\pi$ of $G$.

\subsection{Cohomologically induced distinguished representations}
The main theme of this paper is an algebraic construction of distinguished representations via cohomological induction.

 To be precise, let $\q$ be a  parabolic subalgebra
of $\g$ which is $\theta$-stable, namely, $\theta(\q)=\q$. Here $\theta: \g\rightarrow \g$ denotes the complexified differential of $\theta: G\rightarrow G$. We use ``$\bar\quad$" to indicate the complex conjugation in various contexts. In particular, $\bar \quad:\g\rightarrow \g$ denotes the complex conjugation with respect to the real form $\Lie(G)$ of $\g$. Note that the parabolic subalgebras $\q$ and $\bar \q$ are opposite to each other. Put
\[
 L:= \operatorname N_G(\q)=\operatorname N_G(\bar \q)\qquad (\textrm{the
  normalizers}),
\]
and put $L_{\mathrm c}:=L\cap K$. Then $L$ is a $\theta$-stable real reductive group, and $L_{\mathrm c}$ is a maximal compact subgroup of it.

Denote by $\n$ the nilpotent radical of $\q\cap [\g,\g]$. Then the
parabolic subalgebras $\q$ and $\bar \q$ respectively have Levi decompositions
\be\label{decomq}
  \q=\l \oplus \n\quad\textrm{and}\quad \bar \q=\l \oplus \bar
  \n.
\ee
Put $\n_{\mathrm c}:= \n\cap \k$.

\newcommand{\rc}{\mathrm{c}}
Write $\Pi_{\bar \q, L_\rc}^{\g,K}$ for the $(\dim \n_\mathrm c)$-th left derived functor of the functor
\[
  \oR(\g,K)\otimes_{\oR(\bar \q, L_\rc)} (\,\cdot\,)
\]
from the category of $(\bar \q,L_\rc)$-modules to the category of $(\g,K)$-modules. Here ``$\,\oR\,$" indicates the Hecke algebra of a pair (see \cite[Chapter I, Section 5]{KV}). Let $X$ be an $(\l, L_\rc)$-module, to be viewed as a $(\bar \q, L_\rc)$-module via the trivial action of $\bar \n$.  Then we get the cohomologically induced $(\g,K)$-module $\Pi_{\bar \q, L_\c}^{\g,K}(X)$.

Note that  $L\cap H$ is a $\theta$-stable real reductive group in $L$, and $L_\rc\cap H_\rc=L\cap H\cap K$ is a maximal compact subgroup of it.
The adjoint representation of  the compact group $L_\rc\cap H_\rc$ on the one-dimensional space
\[
  \wedge^{\mathrm{top}} (\h_\rc /(\l_\rc\cap \h_\rc)) \qquad (\textrm{the top degree wedge product})
\]
preserves a real form. Thus it corresponds to a quadratic character of $L_\rc\cap H_\rc$. Consequently, it is also an $(\l\cap \h, L_\rc\cap H_\rc)$-module with the trivial $(\l\cap \h)$-action.
This $(\l\cap \h, L_\rc\cap H_\rc)$-module further corresponds to a quadratic character of $L\cap H$, which we denote by $\varepsilon_{L\cap H}$.
Using this character, we define a character on $L\cap H$ by
\be\label{defchilh}
  \chi_{L\cap H}:=\varepsilon_{L\cap H}\cdot \chi|_{L\cap H}.
\ee

Under the assumptions
 \be\label{q}
  \q+\h=\g \quad\textrm{and}\quad \q\cap \h=\bar \q\cap \h,
\ee
in Section \ref{cohd},  we will construct $\chi$-invariant linear functionals on $\Pi_{\bar \q, L_\rc}^{\g,K}(X)$ from $\chi_{L\cap H}$-invariant linear functionals on $X$, in other words,  we will define a linear map
 \be\label{definvf}
 \Pi_{\bar \q, L_\rc}^{\g,K}:    \Hom_{\l\cap \h,L_\rc\cap H_\rc}(X,\chi_{L\cap H})\rightarrow  \Hom_{\h,H_\rc}(\Pi_{\bar \q, L_\rc}^{\g,K}(X),\chi).
 \ee

 \begin{examplet}\label{qsym00}
 Let $G_1$ be a real reductive group with a Cartan involution $\theta_1$. Suppose $G=G_1\times G_1$ and $\theta=\theta_1\times \theta_1$. Suppose $H$ is the group $G_1$ diagonally embedded in $G$. Let $\q_1$ be a $\theta_1$-stable parabolic subalgebra of $\g_1$. Then $\q:=\q_1\oplus \overline{\q_1}$ satisfies \eqref{q}.
  \end{examplet}

\begin{remark} By using the pair $(G, H)$ of Example \ref{qsym00},  our construction in \eqref{definvf} recovers the construction of Shapovalov forms (\cf \cite[Section VI.4]{KV}) which are used to prove the unitarity of certain cohomologically induced representations.

\end{remark}

 \begin{examplet}\label{qsym}
 Suppose $\sigma$ is as in Theorem \ref{autcon0} so that $H$ is a symmetric subgroup. Let $x$ be an element of the Lie algebra of $K$ such  that $\sigma(x)=-x$ (here $\sigma:\g\rightarrow \g$ denotes the complexified differential of $\sigma: G\rightarrow G$). Suppose $\q=\q_x$ is the sum of the eigenspaces with non-negative eigenvalues of the operator $[x/\sqrt{-1} ,\, \cdot\,]:\g\rightarrow \g$.
 Then $\q$ is a $\theta$-stable parabolic subalgebra satisfying \eqref{q}. In this case,  $\sigma(L)=L$, and $L\cap H$ is a symmetric subgroup of $L$.
  \end{examplet}

\begin{remark}
Let the notation and the assumptions be as in Example \ref{qsym}.
Flensted-Jensen \cite{FJ} and Oshima-Matsuki \cite{OM} classify the discrete series representations of $G$ on $G/H$, namely the irreducible subrepresentations of $\oL^2(G/H)$. In particular, they show that
such a representation exists if and only if the symmetric spaces $G/H$ and $K/H_\rc$ have equal rank.
Assume that  $G$ is connected.  Schlichtkrull
\cite{Sch1} and Vogan \cite{Vog2} prove that all these discrete series representations have  the form $\Pi_{\bar \q, L_\rc}^{\g,K}(X)$, where $\q=\q_x$ for an appropriate $x$ as in Example \ref{qsym}, and $X$ is an appropriate  character of $L$.   The embedding $\Pi_{\bar \q, L_\rc}^{\g,K}(X)\hookrightarrow \oL^2(G/H)$ provided by the works of Flensted-Jensen  and Oshima-Matsuki, followed by the evaluating map at $1\in G/H$, yields an invariant linear functional on $\Pi_{\bar \q, L_\rc}^{\g,K}(X)$, namely an element of
\[
  \Hom_{\h,H_\rc}(\Pi_{\bar \q, L_\rc}^{\g,K}(X),\C)\qquad (\textrm{$\C$ stands for the trivial representation}).
\]
 We expect that  our construction \eqref{definvf} also produces this  invariant linear functional.

 \end{remark}

Besides the symmetric subgroup case of the  above two examples, there are some other interesting cases of $\q$ satisfying \eqref{q}. See \cite[Section 2.2]{Sun} for an example of the Gross-Prasad case.

\begin{remarks} (a) Assume that the $(\l, L_\rc)$-module $X$ is  finitely generated and admissible, and $\varphi\in \Hom_{\l\cap \h,L_\rc\cap H_\rc}(X,\chi_{L\cap H})$ continuously extends to the Casselman-Wallach globalization of $X$. It is natural to ask the following question: does the linear functional $\Pi_{\bar \q, L_\rc}^{\g,K}(\varphi)$ continuously extend  to the Casselman-Wallach globalization of $\Pi_{\bar \q, L_\rc}^{\g,K}(X)$? By Theorem \ref{autcon0}, the answer is yes in the symmetric subgroup case. Due to the lack of a  theory of cohomological inductions in the setting of Casselman-Wallach representations, not much is known beyond this case.

(b) With the notation as in (a), assume that $\Pi_{\bar \q, L_\rc}^{\g,K}(\varphi)$ continuously extends to the Casselman-Wallach globalization $(\Pi_{\bar \q, L_\rc}^{\g,K}(E))^\infty$ of $\Pi_{\bar \q, L_\rc}^{\g,K}(E)$. Then we get a $G$-intertwining linear map
\[
   \begin{array}{rcl}
    (\Pi_{\bar \q, L_\rc}^{\g,K}(E))^\infty&\rightarrow & \Ind_H^G \chi:=\{f\in \con^\infty(G)\mid f(hg)=\chi(h)f(g),\,h\in H, \,g\in G\},\\
      v&\mapsto&(g\mapsto (\Pi_{\bar \q, L_\rc}^{\g,K}(\varphi))(g.v)).
      \end{array}
\]
For unitary $\chi$, it is interesting to know in which cases the image of the above map is contained in the space of square integrable sections.
See \cite{Sch1} for a relevant work in the symmetric subgroup case.

\end{remarks}

If $G$ is compact, then both $H$ and $L$ are compact, and the linear map \eqref{definvf} is also written as
 \be\label{definvf002}
 \Pi_{\bar \q, L}^{\g,G}:    \Hom_{L\cap H}(X,\chi_{L\cap H})\rightarrow  \Hom_{H}(\Pi_{\bar \q, L}^{\g,G}(X),\chi).
 \ee
The following theorem will be proved in Section \ref{secpr}. It asserts that, when $G$ is compact and connected, our construction produces all the $\chi$-invariant linear functionals.

\begin{thm}\label{main} 
Suppose $G$ is compact and connected. Let $X$ be an $L$-module (namely an $(\l, L)$-module). Assume \eqref{q} holds so that the linear map \eqref{definvf002} is defined. Then the linear map \eqref{definvf002}  is surjective.
\end{thm}

In many interesting cases,  the two spaces in \eqref{definvf002}  are both one-dimensional. When this is the case, the surjectivity of the linear map \eqref{definvf002} implies that it is also injective. In general, the following example shows that the linear map \eqref{definvf002} may not be injective.

\begin{examplet}
Suppose $G=\SU(2)$,  $H=\SO(2)$
and  $\chi=\chi_k$ ($k\in \Z$) is the character of $H$ given by
\[
   \left[
  \begin{array}{cc}
   \cos t & \sin t\\
 -\sin t  &  \cos t \\
  \end{array}
\right] \mapsto \cos (kt)+\sqrt{-1} \sin( kt).
\]

Suppose $\q$ is the Borel subalgebra of $\g$ consisting of the upper triangular matrices so that \eqref{q} is satisfied. Then \[
L=\left\{ \left[
  \begin{array}{cc}
   a &0 \\
 0  &  a^{-1} \\
  \end{array}
\right] \mid a\in \oU(1) \right\}=\oU(1).
\]
Suppose $X=X_m$ is a one-dimensional representation of $L$ of weight $m\geq 2$. Then $\Pi_{\bar \q, L}^{\g,G}(X_m)$ is an irreducible representation of $G$ of highest weight $m-2$.

Note that $L\cap H=\{\pm 1\}$ and the character $\varepsilon_{L\cap H}$ is trivial.
If $k$ and $m$ have the same parity, then
\[
  \dim \Hom_{L\cap H}(X_m,\chi_k)=1.
\]
On the other hand,
\[
  \dim \Hom_{H}(\Pi_{\bar \q, L}^{\g,G}(X_m),\chi_k)=0
\]
unless $2-m\leq k\leq m-2$. Therefore, if $k$ and $m$ have the same parity, and $\abs{k}>m-2$, then  \eqref{definvf002} is a linear map
  from a one-dimensional space to a zero space. Thus it is not injective.

\end{examplet}

\subsection{Test vectors in the bottom layers}\label{inttest}

Now we return to the general case when $G$ may or may not be compact.
Given a $\chi$-distinguished representation $(\pi,\varphi)$ of $G$, it is an important (and often hard) problem to find an explicit vector $v_0\in \pi$ such that $\varphi(v_0)\neq 0$. Such a vector is called a test vector of the $\chi$-distinguished representation. For arithmetic applications, we are particularly interested in the case when $\pi$ is an irreducible unitarizable representation with nonzero cohomology, and we hope to find a test vector in $\pi$ which supports the cohomology.

Recall that all irreducible unitary representations with nonzero cohomology are obtained by cohomological induction. At least for these representations, the bottom layers coincide with the minimal $K$-types (in the sense of Vogan), and they have non-trivial contributions to the cohomologies (see \cite{VZ} and \cite{BW}). In this sense, we refer  test vectors in the bottom layers as cohomological test vectors.
In Theorem \ref{nonvmod} of Appendix \ref{appendix}, we will show that the non-vanishing on the bottom layers implies the non-vanishing of certain restriction  maps of cohomology spaces. These restriction maps capture  the Archimedean behaviors of modular symbols, and we call them modular symbols at infinity. The modular symbols are non-trivial and of arithmetic  interest only when the corresponding modular symbols at infinity are nonzero (see  \cite{AG, GR, AS, GHL,KMS} for examples). In Sections \ref{firsta} and \ref{seconda}, we will give two  examples of nonzero modular symbols at infinity, which are respectively used in the arithmetic study of the Asai L-functions \cite{GHL} and the standard L-functions for $\mathrm{GSpin}(2n+1)$ \cite{AG, GR}.

Recall from \cite[Lemma 5.10]{KV} that if $G$ is connected, then so is $L$. To be a little more general, we now assume that $L$ is connected, but allow $G$ to be disconnected. This includes the cases of $(G,L)=(\GL_{2n}(\R), \GL_n(\C))$ or $(\GL_{2n}(\R), (\C^\times)^n)$  which we are considering in this paper. We use a superscript ``$\, ^\circ\, $" to indicate the identity connected component of a Lie group. The connectedness of $L$ implies that $L_\rc^\circ=L_\rc\subset K^\circ$.

Similar to \eqref{decomq}, we have Levi decompositions
\[
  \q_\mathrm c=\l_\rc\oplus \n_\mathrm c\quad\textrm{and}\quad \overline{\q_\mathrm c}=\l_\rc\oplus \overline{
  \n_\mathrm c},
\]
where $\q_\mathrm c:=\q\cap \k$ is a parabolic subalgebra of $\k$, and $\n_\mathrm c:=\n\cap \k$ is  the nilpotent radical of $\q_\mathrm c\cap [\k,\k]$.
Let $E$ be an $L_\rc$-module, namely an $(\l_\rc, L_\rc)$-module. Similar to  the $(\g,K)$-module $\Pi_{\bar \q, L_\rc}^{\g,K}(X)$, we have a  cohomologically induced $K^\circ$-module $\Pi_{\overline{\q_\mathrm c}, L_\rc}^{\k,K^\circ}(E)$.

We say that an  irreducible $L_\rc$-module is dominant (with respect to $\q_{\mathrm c}$) if it is isomorphic to  $\tau^{\n_\mathrm c}$ for an irreducible $K^\circ$-module $\tau$. Here and as usual, a superscript Lie algebra indicates the vectors annihilated by the  Lie algebra action.
        If $E$ is irreducible, then the algebraic version of the Bott-Borel-Weil Theorem (\cf \cite[Corollary 4.160 and Proposition 4.173]{KV}) implies that
        \be\label{bbw}
       \Pi_{\overline{\q_\mathrm c}, L_\rc}^{\k,K^\circ}(E)\cong
          \left\{
          \begin{array}{ll}
            \tau,\quad & \textrm{if $E\otimes \wedge^{\dim \n_\mathrm c}\overline{\n_\mathrm c}  \cong \tau^{\n_\mathrm c}$ is dominant};\\
            \{0\} , \quad &\textrm{if $E\otimes \wedge^{\dim \n_\mathrm c} \overline{\n_{\mathrm c}}$ is not dominant.}
                      \end{array}
          \right.
        \ee

Note that \eqref{q} implies that
\be\label{qchc}
 \q_{\mathrm c}+\h_\rc=\k \quad\textrm{and}\quad \q_{\mathrm c}\cap \h_\rc= \overline{\q_{\mathrm c}}\cap \h_\rc.
 \ee
Similar to  \eqref{definvf}, the equalities in \eqref{qchc} enable us to define a linear map
  \be\label{definvf2}
   \Pi_{\overline{\q_\mathrm c}, L_\rc}^{\k,K^\circ}:    \Hom_{ L_\rc \cap H_\rc}(E,\chi_{L\cap H})\rightarrow  \Hom_{K^\circ\cap H_\rc}(\Pi_{\overline{\q_\mathrm c},L_\rc}^{\k,K^\circ}(E),\chi).
 \ee
By Theorem \ref{main}, this map is surjective.

Recall that $X$ is an $(\l, L_\rc)$-module.
 Fix a homomorphism $\phi\in \Hom_{L_\rc}(E, X)$. Then  we have the bottom layer map (see Section \ref{secb})
\[
   \beta(\phi)\in \Hom_{K^\circ}(\Pi_{\overline{\q_\mathrm c}, L_\rc}^{\k,K^\circ}(E),\Pi_{\bar \q, L_\rc}^{\g,K}(X)).
\]

The following proposition asserts that our construction of invariant functionals is compatible with the bottom layer map.

\begin{prpt}\label{commb0}(see Proposition \ref{commb})
Suppose \eqref{q} is satisfied and $L$ is connected.  Let $X$ be an $(\l, L_\rc)$-module and let   $E$ be an $L_\rc$-module. Then the diagram
 \begin{equation}\label{cdk000}
 \begin{CD}
           \Pi_{\overline{\q_\mathrm c}, L_\rc}^{\k,K^\circ}(E) @>\beta(\phi) >> \Pi_{\bar \q, L_\rc}^{\g,K}(X) \\
            @VV  \Pi_{\overline{\q_\mathrm c}, L_\rc}^{\k,K^\circ}(\varphi\circ \phi) V           @VV\Pi_{\bar \q, L_\rc}^{\g,K}(\varphi) V\\
           \chi @= \chi\\
  \end{CD}
\end{equation}
commutes, for all $\phi\in \Hom_{L_\rc}(E, X)$ and $\varphi\in \Hom_{\l\cap \h, L_\rc \cap H_\rc}(X,\chi_{L\cap H})$.
\end{prpt}


As mentioned earlier, we hope to find test vectors in the bottom layers. More precisely, at least in some interesting cases, we want  to show that the $\chi$-invariant linear functional $\Pi_{\bar \q, L_\rc}^{\g,K}(\varphi)$ does not vanish on the bottom layers.
Proposition \ref{commb0} reduces this problem to the case of compact connected groups. Consequently, we may apply  Theorem \ref{main} to get the following result.

\begin{thm}\label{mainf}
Suppose \eqref{q} is satisfied and $L$ is connected. Let $X$ be an $(\l, L_\rc)$-module, and let   $E$ be an irreducible $L_\rc$-submodule of $X$ such that
\[
\dim \Hom_{ L_\rc\cap H_\rc }(E,\chi_{L\cap H})=\dim \Hom_{K^\circ\cap H_\rc}(\Pi_{\overline{\q_\mathrm c}, L_\rc}^{\k,K^\circ}(E),\chi).
\]
If there is a $\chi_{L\cap H}$-invariant linear functional on $X$ which does not vanish on $E$, then there is a $\chi$-invariant linear functional on $\Pi_{\overline{\q}, L_\rc}^{\g,K}(X)$ which does not vanish on $\Pi_{\overline{\q_\mathrm c}, L_\rc}^{\k,K^\circ}(E)$.

\end{thm}

Note that the assumptions of Theorem \ref{mainf} imply that the $K^\circ$-module  $\Pi_{\overline{\q_\mathrm c}, L_\rc}^{\k,K^\circ}(E)$ is nonzero, and hence irreducible. It is  viewed as a $K^\circ$-submodule of $\Pi_{\overline{\q}, L_\rc}^{\g,K}(X)$ via the bottom layer map which is injective (see \cite[Chapter V, Section 6]{KV}).

\begin{examplet}\label{exa110}
Let $(G, H, \theta, \sigma)$ be as in Example \ref{exm11} so that $G=\GL_{2n}(\R)$ and $H=\Sp_{2n}(\R)$.
 Combining \cite[Theorem A]{GOSS} and \cite[Theorem 1.1]{AOS}, the set of $H$-distinguished unitarizable irreducible Casselman-Wallach representations of $G$ is determined (see also \cite{GSS}). Here an irreducible Casselman-Wallach representation of $G$ is said to be $H$-distinguished if there is a  nonzero $H$-invariant continuous  linear functional on it.  A major step in the determination is to show that if $n$ is even, then every Speh representation of $G$ is $H$-distinguished. This is done in \cite[Proposition 4.0.2]{GOSS} by global method, and in  \cite[Theorem A]{GSS} by analytic method. This also follows from Theorem \ref{mainf} as explained in what follows.

For $k\geq 1$, denote by $D_k$ the unique (up to isomorphism) irreducible Cassleman-Wallach representation of $\GL_2(\R)$ in the relative discrete series which has the same infinitesimal character as that of $\oS^{k-1}(\C^2)\otimes \abs{\det}^{-\frac{k-1}{2}}$. Here $\C^2$ stands for the standard representation of $\GL_2(\R)$, and ``$\oS^{k-1}$" indicates the $(k-1)$-th symmetric power. The Speh representation $J_{2n, k}$ of $\GL_{2n}(\R)$ is  the unique irreducible quotient of
\[
  \Ind_{P_{2,2,\cdots, 2}}^G ((D_k\otimes \abs{\det}^{\frac{n-1}{2}})\widehat \otimes (D_k\otimes \abs{\det}^{\frac{n-3}{2}})\widehat \otimes \cdots \widehat \otimes (D_k\otimes \abs{\det}^{\frac{1-n}{2}})).
\]
Here and henceforth, $P_{2,2,\cdots, 2}$ denotes the upper triangular parabolic subgroup of type $(2,2,\cdots, 2)$, ``$\widehat{\otimes}$" stands for the completed projective tensor product, and ``$\Ind$" stands for the normalized smooth induction. Let $y$ be a skew-symmetric matrix in $\gl_{2n}(\R)$ such that $y^2=-1_{2n}$. Define $\q_y$  as in Example \ref{qsym} and suppose $\q=\q_y$. Define a  unitary character
\be\label{unicn}
\kappa_n: \GL_n(\C)\rightarrow \C^\times, \quad z\mapsto \frac{\det( z)}{\abs{\det(z)}}.
\ee
View $y$ as a complex structure so that $\R^{2n}$ is viewed as an $n$-dimensional  complex vector space. Then  $L\cong \GL_n(\C)$ and we view $\kappa_n$ as a character on $L$ via this isomorphism.  It is known that the underlying $(\g,K)$-module of $J_{2n, k}$ is isomorphic to $\Pi_{\bar \q, L_\rc}^{\g,K}(\kappa_n^{k+n})$ (see \cite[Section 4]{Sp}).

Now suppose $n=2m$ is even and
\[
  y=\left[
  \begin{array}{cccc}
   0&1_m&0&0 \\
   -1_m  &  0 & 0&0 \\
    0&0&0&-1_m \\
   0  &  0 &1_m&0 \\
  \end{array}
\right].
\]
Then  $\sigma(y)=-y$ and $\q$ satisfies \eqref{q}. Note that
\[
K^\circ=\SO(2n), \quad H_\rc\cong \oU(n), \quad L_\rc\cong \oU(n),\quad L\cap H\cong \Sp_{2m}(\C),  \quad L_\rc \cap H_\rc \cong \Sp(m).
\]
 By \eqref{bbw}, $\tau:=\Pi_{\overline{\q_\mathrm c}, L_\rc}^{\k,K^\circ}(\kappa_n^{k+n})$ is an irreducible representation of $\SO(2n)$ of highest weight $(k+1, k+1, \cdots, k+1)$.
Cartan-Helgason Theorem (\cf \cite[Chapter V, Theorem 4.1]{Hel}) implies that
\[
  \dim \Hom_{H_\rc}(\tau,\C)=1.
\]
Then it is clear from Theorem \ref{mainf} and Theorem \ref{autcon0} that there is an $H$-invariant continuous linear functional on $J_{2n,k}$ which does not vanish on $\tau$.
We remark that neither the  global method of \cite{GOSS} nor the  analytic method of \cite{GSS} proves the non-vanishing of the invariant linear functional on the minimal $K$-type.
\end{examplet}

In Section \ref{cohd}, we will explain the general construction of
cohomologically induced distinguished representations.
Section \ref{secpr} is devoted to a proof of Theorem \ref{main}. In Section \ref{firsta} and Section \ref{seconda}, we will give two arithmetically interesting applications of our construction.  In Appendix \ref{appendix}, a general non-vanishing result is proved for modular symbols at infinity, which contains Theorem \ref{nv1} and Theorem \ref{nv2} as special cases.

\begin{acknowledgements}
The author thanks Michael Harris for the suggestion to work on the non-vanishing hypothesis for Asai L-functions (Theorem \ref{nv1}),  and thanks Dihua Jiang for the suggestion to work on the non-vanishing hypothesis for L-functions for $\mathrm{GSpin}(2n+1)$ (Theorem \ref{nv2}).  He thanks Patrick Delorme for confirming the automatic continuity theorem in the twisted case (Theorem
\ref{autcon0}), and thanks Wee Teck Gan for the suggestion to write down a general criterion (as in Theorem \ref{nonvmod}) for the non-vanishing of modular symbols at infinity. He also thanks Hongyu He, Fabian Januszewski and Chen-Bo Zhu for helpful
discussions. Part of the work was done when the author participated
in the ``Analysis on Lie groups" program at Max Planck Institute for
Mathematics, in 2011. The author thanks the organizers for the invitation and
thanks Max Planck Institute for Mathematics for their hospitality. He also thanks the referees for helpful suggestions which resulted in the improvement of the paper, in particular, for the suggestion to consider the pair $(\GL_{2n}(\R), \Sp_{2n}(\R))$ of Example \ref{exa110}.
The work was supported in part by the National Natural Science Foundation of China (No.  11525105, 11688101, 11621061 and 11531008).
\end{acknowledgements}



\section{Cohomologically induced distinguished representations}\label{cohd}

\subsection{Bernstein functors and Zuckerman functors}\label{secbz}
We begin with recalling some basic facts concerning Bernstein functors and Zuckerman functors.
Let $(\g,K)$ be a pair as in \cite[Section I.4]{KV}, namely,
\begin{itemize}
  \item $\g$ is a finite-dimensional Lie algebra over $\C$;
  \item $K$ is a compact Lie group whose complexified Lie algebra $\k$ is identified with a Lie subalgebra of $\g$;
  \item there is given an action $\Ad$ of $K$ on $\g$ by automorphisms which extends the adjoint representation of $K$;
  \item the differential of the action $\Ad$ equals the adjoint action of $\k$ on $\g$.
\end{itemize}
Let $L_\rc$ be a closed subgroup of $K$. Denote by $\CB$ the Bernstein functor
\[
   \oR(\g,K)\otimes_{\oR(\g, L_\rc)} (\,\cdot\,)
\]
from the category of $(\g,L_\rc)$-modules to the category of $(\g,K)$-modules. Write $\CB_j$ for its $j$-th left derived functor ($j\in \Z$).
Likewise, denote by $\CZ$ the Zuckerman functor
\[
   \Hom_{\oR(\g, L_\rc)}(\oR(\g,K), (\,\cdot\,))_{\textrm{$K$-finite}}
\]
from the category of $(\g,L_\rc)$-modules to the category of $(\g,K)$-modules, where ``$\textrm{$K$-finite}$" indicates the space of $K$-finite vectors. Write $\CZ^j$ for its $j$-th right derived functor ($j\in \Z$).

As in \cite{DV}, in order to describe the functor $\CB_j$ more explicitly, for each $(\g,L_\rc)$-module $V_0$, we introduce a linear action
\begin{equation}\label{act01}
  (\k,L_\rc)\times (\g,K) \curvearrowright  \oR(K)\otimes V_0\qquad (\oR(K):=\oR(\k,K)),
\end{equation}
as follows:
\begin{itemize}
  \item the pair $(\k,L_\rc)$ acts by the tensor
product of the right translation on  $\oR(K)$ and the restriction of
the $(\g,L_\rc)$-action on $V_0$;
  \item the group $K$ acts on
$\oR(K)\otimes V_0$ through the left translation on $\oR(K)$;
  \item the Lie algebra $\g$ acts on $\oR(K)\otimes V_0$ such that
  \begin{equation}\label{actg0}
  \int_K f(k) \, \od\,\!(X.(\mu\otimes v))(k)=\int_K f(k)(\Ad_{k^{-1}}X).v\, \od\!\mu(k),
\end{equation}
for all $X\in \g, \,\mu\in \oR(K),\, v\in V_0$, and $f\in \C[K]$.
\end{itemize}
Here  
$\C[K]$ denotes the space of all left $K$-finite (or
equivalently, right $K$-finite) smooth functions on $K$. (Similar
notation will be used for other compact Lie groups.) In the left hand side of \eqref{actg0}, we view $\oR(K)\otimes V_0$ as a space of $V_0$-valued measures on $K$.

Under these actions, $\oR(K)\otimes V_0$ becomes a
$(\k,L_\rc)$-module as well as a weak $(\g,K)$-module (see
\cite[Chapter I, Section 5]{KV} for the notion of weak
$(\g,K)$-modules). Furthermore, the $(\k,L_\rc)$-action and the
$(\g,K)$-action commute with each other, and
\begin{equation}\label{cohom0}
 \CB_j(V_0)=\oH_j(\k, L_\rc;\oR(K)\otimes V_0), \quad
  j\in \Z,
\end{equation}
as $(\g,K)$-modules (\cf \cite[Section III.3]{KV}). In particular the homology space $\oH_j(\k, L_\rc;\oR(K)\otimes V_0)$ is not only a weak $(\g,K)$-module, but also a $(\g,K)$-module. The reader is referred to \cite[(2.126)]{KV} and \cite[(2.127)]{KV} for the explicit complexes which  respectively compute the relative Lie algebra homology spaces and  the relative Lie algebra cohomology spaces.

Similarly, in order to describe the functor $\CZ^j$ more explicitly,  for each $(\g,L_\rc)$-module $V$,
 we introduce a linear  action
\begin{equation}\label{act1}
  (\k,L_\rc)\times (\g,K) \curvearrowright \C[K]\otimes V,
\end{equation}
as follows:
\begin{itemize}
  \item the pair $(\k,L_\rc)$ acts by the tensor
product of the left translation on  $\C[K]$ and the restriction of
the $(\g,L_\rc)$-action on $V$;
  \item the group $K$ acts on
$\C[K]\otimes V$ through the right translation on $\C[K]$;
  \item the Lie algebra $\g$ acts by
\begin{equation}\label{actg}
  (X.f)(k):=(\Ad_{k}X).f(k), \quad k\in K, \,f\in \C[K]\otimes V.
\end{equation}
\end{itemize}
Here and as usual, $\C[K]\otimes V$ is identified
with a space of $V$-valued functions on $K$.
Then similar to \eqref{cohom0}, we have that
\be\label{cohom00}
   \CZ^j(V)=\oH^j(\k, L_\rc;\C[K]\otimes V), \quad
  j\in \Z,
\ee
as $(\g,K)$-modules.

Recall the following Zuckerman Duality Theorem.
\begin{thm}\label{zd}(see \cite[Corollary 3.7]{KV})
For every $(\g,L_\rc)$-module $V_0$ and every $j\in \Z$, there is a canonical isomorphism
\be\label{zuckd}
   \CB_j(V_0)\cong \CZ^{m-j}(\wedge^{m} \k/\l_\rc\otimes V_0)\qquad(m:=\dim \k/\l_\rc)
\ee
of $(\g,K)$-modules. Here $\wedge^{m}\k/\l_\rc$ is viewed as a $(\g,L_\rc)$-module such that $\g$ acts trivially, and $L_\rc$ acts through the adjoint representation.

\end{thm}
\begin{proof}
One checks that the linear isomorphism
\[
   \begin{array}{rcl}
     I_j: \wedge^j(\k/\l_\rc)\otimes \oR(K)\otimes V_0 &\rightarrow & \Hom_\C(\wedge^{m-j}(\k/\l_\rc), \C[K]\otimes \wedge^{m} \k/\l_\rc\otimes V_0),\\
         \omega\otimes f\mu_K\otimes v&\mapsto & (\omega'\mapsto  f^\vee\otimes (\omega\wedge \omega') \otimes v )
   \end{array}
\]
is $L_\rc$-equivariant and $(\g,K)$-equivariant, and $\{(-1)^\frac{j(j+1)}{2}I_j\}_{j\in \Z}$ restricts to a morphism of the chain complexes which compute \eqref{cohom0} and \eqref{cohom00}. Here $\mu_K$ denotes the Haar measure on $K$ such that $K^\circ$  has volume $1$, and $f^\vee$ is the function on $K$ such that  $f^\vee(x)=f(x^{-1})$ for all $x\in K$.
Therefore $I_j$ induces an isomorphism \eqref{zuckd}. See \cite[Chapter III]{KV} for more details.
\end{proof}

\subsection{Some one-dimensional modules}
Now we retain the notation of the Introduction. Recall that we have the Lie groups
\[
\xymatrix{
                & L \ar@{-}[dr]             \\
  L\cap H \ar@{-}[ur] \ar@{-}[dr] & & L_\rc \ar@{-}[dl]         \\
    & L_\rc\cap H_\rc&
     }
\xymatrix{
                &             \\
  &  \subset    &     \\
    &
     }
 \xymatrix{
                & G \ar@{-}[dr]             \\
  H \ar@{-}[ur] \ar@{-}[dr] & & K \,,\ar@{-}[dl]         \\
    & H_\rc &
     }
\]
and the Lie algebras $\q=\l\oplus \n$ and $\q_\mathrm c=\l_\rc\oplus \n_\mathrm c$. We first allow $L$ to be disconnected.

Recall from \eqref{q} that
\be\label{q2}
  \q+\h=\g \quad\textrm{and}\quad \q\cap \h=\bar \q\cap \h.
\ee

\begin{lemt}\label{tranver}
One has that
\begin{equation}\label{ghq1}
  \g=\q+\h=\bar \q+\h, \quad \q\cap \h=\bar \q \cap \h=\l\cap \h,
\end{equation}
and
\begin{equation}\label{ghq2}
 \k=\q_{\mathrm c}+\h_\rc=\overline{\q_{\mathrm c}}+\h_\rc,\quad \q_{\mathrm c}\cap \h_\rc=\overline{\q_{\mathrm c}}\cap \h_\rc=\l_\rc\cap \h_\rc.
\end{equation}
\end{lemt}
\begin{proof}
The equalities of \eqref{ghq1} is an obvious consequence of the assumption \eqref{q2}. The equalities of \eqref{ghq2} is implied by \eqref{ghq1} since $\h$, $\q$, $\bar \q$ and $\l$ are all $\theta$-stable.
\end{proof}

\begin{lemt}\label{defs}
As representations of $L_\rc\cap H_\rc$,
\be\label{hrc0}
   \h_\rc/(\l_\rc\cap \h_\rc)\cong\n_\mathrm c\cong \overline{\n_\mathrm c}\cong \k/(\l_\rc+\h_\rc),
\ee
and they are all self-dual.
\end{lemt}
\begin{proof}
By \eqref{ghq2}, we have that
\begin{equation}\label{isocc}
  \h_\rc/(\l_\rc\cap \h_\rc)=\h_\rc/( \overline{ \q_{\mathrm c}}\cap \h_\rc)\cong \k/\overline{\q_{\mathrm c}}\cong \n_{\mathrm c}\cong \q_\mathrm c/\l_\rc\cong \k/(\l_\rc+\h_\rc).
\end{equation}
Similarly,
\begin{equation}\label{isocc}
  \h_\rc/(\l_\rc\cap \h_\rc) \cong \overline{ \n_{\mathrm c}}\cong  \k/(\l_\rc+\h_\rc).
\end{equation}
Note that $\n_\mathrm c$ and $\overline{\n_\mathrm c}$ are dual to each other under the Killing form of $\k$. Therefore the lemma follows.
\end{proof}

Write $S:=\dim \n_\mathrm c$ for simplicity. One important consequence of our main assumption \eqref{q2} is the following ``degree match":
\[
   \dim (\h_\rc/(\l_\rc \cap \h_\rc))=S\qquad (\textrm{this follows from  \eqref{hrc0}}).
\]

Recall from the Introduction that $\chi$ is a character of $H$.   View $\wedge^S \h_\rc/(\l_\rc \cap \h_\rc)$ as an $(\h, L_\rc\cap H_\rc)$-module such that $\h$ acts trivially and $L_\rc\cap H_\rc$ acts through the adjoint representation.
Define a one-dimensional  $(\h,L_\rc\cap H_\rc)$-module
\[
  \xi:=(\wedge^S \h_\rc/(\l_\rc \cap \h_\rc) )\otimes \chi|_{(\h,L_\rc\cap H_\rc)}.
\]

Note that the adjoint action of $L_\rc$ on $\wedge^{2S}\k/\l_\rc$ is trivial. We view $\wedge^{2S}\k/\l_\rc$ as a trivial  $(\g,L_\rc)$-module, and likewise view $\wedge^{2S}(\k/\l_\rc)^\vee$ as a trivial  $(\g,L_\rc)$-module. Here and henceforth, a superscript ``$\,\,^\vee\,$" indicates the dual module.
Put
\[
  \xi_0:= (\wedge^{2S}(\k/\l_\rc)^\vee)\otimes \xi.
\]
Then the one-dimensional module $(\xi_0)|_{(\l\cap \h, L_\rc\cap H_\rc)}$ corresponds to  the character $\chi_{L\cap H}$ of $L\cap H$ (which is defined in the Introduction).
We identify $\xi_0$ with $\chi_{L\cap H}$ in what follows.

\subsection{The  construction}

Let $X$ be an $(\l,L_\rc)$-module, and let  $$\varphi\in \Hom_{\l\cap \h,L_\rc\cap H_\rc}(X,\chi_{L\cap H}).$$ View $X$ as a $(\bar \q, L_\rc)$-module such that $\bar \n$ acts trivially on it. Put
\[
  V_0:=\oU(\g)\otimes_{\oU(\bar \q)}X\qquad(\textrm{``$\oU$" indicates the universal enveloping algebra}).
  \]
This is a $(\g,L_\rc)$-module with $\g$ acts by the left multiplication, and
$L_\rc$ acts by the tensor product of its adjoint action on $\oU(\g)$
and its given action on $X$.

\begin{lemt}\label{phic}
There is a unique $(\h, L_\rc\cap H_\rc)$-equivariant linear map
\be\label{psi0}
\psi_0:
V_0\rightarrow \xi_0
\ee
 which extends $\varphi$.
\end{lemt}
\begin{proof}
By \eqref{ghq1}, we have that
\[
   V_0=\oU(\g)\otimes_{\oU(\bar \q)}X=\oU(\h)\otimes_{\oU(\l\cap \h)}X
  \]
as an $(\h,L_\rc\cap H_\rc)$-module. Therefore the lemma is a form of Frobenious
reciprocity.
\end{proof}

Define a $(\g,L_\rc)$-module
\[
  V:= \wedge^{2S} \k/\l_\rc \otimes V_0.
\]
The linear functional $\psi_0$ of \eqref{psi0} induces an $(\h,L_\rc\cap H_\rc)$-equivariant linear functional
\be\label{psi}
\psi:=1_{\wedge^{2S} \k/\l_\rc}\otimes \psi_0:
V\rightarrow \xi.
\ee

Similar to the action
\begin{equation}\label{act11}
  (\k,L_\rc)\times (\g,K) \curvearrowright \C[K]\otimes V
\end{equation}
of \eqref{act1}, based on the $(\h,L_\rc\cap H_\rc)$-action on $\xi$, we define
an action
\be\label{act2}
  (\h_\rc,L_\rc\cap H_\rc)\times (\h,H_\rc) \curvearrowright \C[H_\rc]\otimes \xi.
\ee
Note that there is a component-wise inclusion
\[
   (\h_\rc,L_\rc\cap H_\rc)\times (\h,H_\rc)\subset (\k,L_\rc)\times (\g,K),
\]
and the map
\begin{equation}\label{map1}
  r_{K,H_\rc}\otimes \psi : \C[K]\otimes V \rightarrow\C[H_\rc]\otimes \xi
\end{equation}
is $(\h_\rc,L_\rc\cap H_\rc)\times (\h,H_\rc)$-equivariant, where $r_{K,H_\rc}$ denotes  the
restriction map.

Write $\tilde \xi$ for the module
\be \label{tnu}
  (\h_\rc,L_\rc\cap H_\rc)\times (\h,H_\rc) \curvearrowright (\wedge^S\h_\rc/(\l_\rc \cap \h_\rc))|_{(\h_\rc,L_\rc\cap H_\rc)}\otimes \chi|_{(\h,H_\rc)}.
\ee
It equals  $\xi$ as a vector space.
\begin{lemt}\label{intg}
The linear map
\begin{equation}\label{map2}
 \begin{array}{rcl}
   \C[H_\rc]\otimes \xi &\rightarrow &\tilde \xi,\medskip\\
      f&\mapsto& \int_{H_\rc} \chi(c)^{-1}\,f(c)\,\od\! c
 \end{array}
\end{equation}
is $(\h_\rc,L_\rc\cap H_\rc)\times (\h,H_\rc)$-equivariant, where $\od\! c$ is the Haar measure on $H_\rc$ such that  $K^\circ \cap H_\rc$ has volume $1$, and as usual, $\C[H_\rc]\otimes \xi$ is
viewed as a space of $\xi$-valued functions on $H_\rc$.
\end{lemt}
\begin{proof}
This is routine to check.
\end{proof}

\begin{lemt}
One has an identification
\begin{equation}\label{cohomo4}
  \oH^S(\h_\rc,L_\rc\cap H_\rc; \tilde \xi)=\chi
\end{equation}
of $(\h,H_\rc)$-modules.
\end{lemt}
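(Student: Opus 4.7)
The plan is to compute $\oH^S(\c, C'; \chi^\circ)$ directly from the standard Chevalley--Eilenberg complex
\[
C^\bullet := \Hom(\wedge^\bullet(\c/\c'), \chi^\circ)^{C'},
\]
which is concentrated in degrees $0$ through $S$ since $\dim(\c/\c') = S$. The cohomology at the top degree is simply the cokernel of the incoming differential $d: C^{S-1} \to C^S$, so I would aim to identify $C^S$ with $\chi$ as an $(\h, C)$-module and then to show that this differential vanishes.

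First I would carry out the identification of $C^S$. Since $\wedge^S(\c/\c')$ is one-dimensional, any linear map $\wedge^S(\c/\c') \to \wedge^S(\c/\c') \otimes \chi$ is automatically $C'$-equivariant: both sides carry the same $C'$-character coming from $\wedge^S(\c/\c')$, while the $\chi$-factor is $(\c, C')$-trivial under the action stipulated in Lemma \ref{intg}. Canonical contraction against a generator of $\wedge^S(\c/\c')$ then gives an $(\h, C)$-equivariant isomorphism $C^S \cong \chi$, with $(\h, C)$ acting on the right through the $\chi$-factor as in the $(\h, C)$-action on $\chi^\circ$.

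Next I would verify that $d: C^{S-1} \to C^S$ vanishes. The Chevalley--Eilenberg differential has two parts: a ``module-action'' sum $\sum_i (-1)^{i-1} X_i \cdot g(\ldots)$ and a ``bracket'' sum $\sum_{i < j} (-1)^{i+j} g(\pi[\tilde X_i, \tilde X_j] \wedge \ldots)$. The first vanishes identically because $\c$ acts trivially on $\chi^\circ$ in the $(\c, C')$-structure (this action factors through $\wedge^S(\c/\c')$, on which $\c$ acts by a character forced to be trivial by compatibility with the trivial $\h$-action). The bracket sum should then be shown to be a $C'$-coboundary, hence to vanish on $C'$-invariant cochains; the cleanest way to package this is via Poincar\'e duality for the reductive pair $(\c, C')$, which gives
\[
\oH^S(\c, C'; \wedge^S(\c/\c') \otimes \chi) \cong \oH_0(\c, C'; \chi) = \chi,
\]
the last equality using that $\chi$ is $(\c, C')$-trivial. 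I expect this vanishing step to be the main obstacle; everything else is essentially formal.

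Finally, the $(\h, C)$-equivariance of the resulting isomorphism $\oH^S(\c, C'; \chi^\circ) \cong \chi$ follows because the $(\h, C)$-action on $\chi^\circ$ commutes with the $(\c, C')$-action (as in Lemma \ref{intg}), hence descends to cohomology and acts there through $\chi$, matching the $(\h, C)$-module structure on the right-hand side.
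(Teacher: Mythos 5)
Your proposal is correct and takes essentially the same route as the paper: the paper's proof likewise reduces to the top-degree identification $\oH^S(\c,C';\wedge^S(\c/\c'))=\Hom_{C'}(\wedge^S(\c/\c'),\wedge^S(\c/\c'))=\C$ (the same standard top-degree/Poincar\'e-duality fact you invoke) and then tensors with the $(\c,C')$-trivially acted factor $\chi$. The details you spell out --- vanishing of the incoming differential on $C'$-invariants and descent of the $(\h,C)$-action to cohomology --- are exactly what the paper leaves implicit.
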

\begin{proof}
Poincar\'e duality (\cf \cite[Corollary 3.6]{KV}) implies that
\[
  \oH^S(\h_\rc,L_\rc\cap H_\rc;  \wedge^S\h_\rc/(\l_\rc \cap \h_\rc))=\C.
\]
Thus
\[
  \oH^S(\h_\rc,L_\rc\cap H_\rc; \tilde \xi)=\oH^S(\h_\rc,L_\rc\cap H_\rc; \wedge^S\h_\rc/(\l_\rc \cap \h_\rc))\otimes \chi=\C\otimes \chi=\chi.
\]
\end{proof}

Finally, we define the $(\h,H_\rc)$-equivariant linear functional
\be\label{pis}
  \Pi_{\bar \q, L_\rc}^{\g,K}(\varphi): \Pi_{\bar \q, L_\rc}^{\g,K}(X)\rightarrow \chi
\ee
to be the composition of the following maps:
\begin{eqnarray}
\label{defpiphi}  \Pi_{\bar \q, L_\rc}^{\g,K}(X) &=& \oH_S(\k,L_\rc; \,\oR(K)\otimes V_0) \\
 \nonumber &\rightarrow &\oH^S(\k,L_\rc; \,\C[K]\otimes V) \\
 \nonumber &\rightarrow &\oH^S(\h_\rc,L_\rc\cap H_\rc; \,\C[H_\rc]\otimes   \xi)\\
\nonumber  &\rightarrow & \oH^S(\h_\rc,L_\rc\cap H_\rc; \,\tilde \xi)=\chi.
\end{eqnarray}
Here the first arrow is the Zuckerman duality isomorphism as in Theorem \ref{zd}, the second arrow is the restriction of cohomology induced by the map \eqref{map1}, and the third arrow is the linear map induced by the map \eqref{map2}.

It is clear that \eqref{pis} yields a linear map
\be\label{definepip}
 \Pi_{\bar \q, L_\rc}^{\g,K}:    \Hom_{\l\cap \h,L_\rc\cap H_\rc}(X,\chi_{L\cap H})\rightarrow  \Hom_{\h,H_\rc}(\Pi_{\bar \q, L_\rc}^{\g,K}(X),\chi).
\ee

\begin{remark}
The construction of $\Pi_{\bar \q, L_\rc}^{\g,K}(\varphi)$ is functorial in the following sense: for all commutative diagram
 \[
 \begin{CD}
            X_1 @>\eta>> X_2 \\
            @VV \varphi_1 V           @VV\varphi_2 V\\
           \chi_{L\cap H} @= \chi_{L\cap H},\\
  \end{CD}
\]
 where $X_i$ is an $(\l,L_\rc)$-module,  $\varphi_i\in \Hom_{\l\cap \h,L_\rc\cap H_\rc}(X_i, \chi_{L\cap H})$ ($i=1,2$), and $\eta\in \Hom_{\l,L_\rc}(X_1,X_2)$,  the diagram
 \[
 \begin{CD}
           \Pi_{\bar \q, L_\rc}^{\g,K}(X_1) @>\Pi_{\bar \q, L_\rc}^{\g,K}(\eta)>> \Pi_{\bar \q, L_\rc}^{\g,K}(X_2) \\
            @VV \Pi_{\bar \q, L_\rc}^{\g,K}(\varphi_1) V           @VV\Pi_{\bar \q, L_\rc}^{\g,K}(\varphi_2) V\\
           \chi @= \chi\\
  \end{CD}
\]
commutes.
\end{remark}

\subsection{Bottom layers}\label{secb}
Now assume that $L$ is connected so that $L_\rc=L_\rc^\circ\subset K^\circ$.
Recall from the Introduction that  $E$ is an $L_\rc$-module and  $\phi\in \Hom_{L_\rc}(E, X) $.
The homomorphism $\phi$ induces a $(\k,L_\rc)\times (\k,K^\circ)$-equivariant linear map
\[
\begin{array}{rcl}
  \oR(K^\circ)\otimes (\oU(\k)\otimes_{\oU(\overline{\q_\mathrm c})} E)&\rightarrow &\oR(K^\circ)\otimes (\oU(\g)\otimes_{\oU(\bar \q)} X),\\
    \mu\otimes X\otimes v&\mapsto & \mu\otimes X\otimes \phi(v).
\end{array}
\]
Taking the relative Lie algebra cohomologies, we get the bottom layer map
\begin{eqnarray}
 \nonumber &  &\Pi_{\overline{ \q_\rc}, L_\rc}^{\k,K^\circ}(E)=\oH_S(\k,L_\rc; \oR(K^\circ)\otimes (\oU(\k)\otimes_{\oU(\overline{\q_\mathrm c})} E)) \\
  \label{beta0} & \rightarrow  &\Pi_{\bar \q, L_\rc}^{\g,K^\circ}(X)=\oH_S(\k,L_\rc; \oR(K^\circ)\otimes (\oU(\g)\otimes_{\oU(\bar \q)} X)).
\end{eqnarray}

The extension by zero map
\[
   \oR(K^\circ )\rightarrow \oR(K)
\]
induces an injective linear map
\begin{eqnarray}
 \nonumber &&\Pi_{\bar \q, L_\rc}^{\g,K^\circ }(X)=\oH_S(\k,L_\rc; \oR(K^\circ )\otimes (\oU(\g)\otimes_{\oU(\bar \q)} X))\\
\label{inj010}  &\hookrightarrow & \Pi_{\bar \q, L_\rc}^{\g,K}(X)=\oH_S(\k,L_\rc; \oR(K)\otimes (\oU(\g)\otimes_{\oU(\bar \q)} X)).
 \end{eqnarray}
Now the composition of \eqref{beta0} and \eqref{inj010} yields the following bottom layer map:
\[
   \beta(\phi):  \Pi_{\overline{ \q_\rc}, L_\rc}^{\k,K^\circ}(E)\rightarrow  \Pi_{\bar \q, L_\rc}^{\g,K}(X).
\]

Similar to \eqref{definepip} and using \eqref{ghq2}, we have a linear map
\be\label{definepipc}
\Pi_{\overline{ \q_\rc}, L_\rc}^{\k,K^\circ}:    \Hom_{L_\rc \cap H_\rc}(E,\chi_{L\cap H})\rightarrow  \Hom_{K^\circ\cap H}( \Pi_{\overline{ \q_\rc}, L_\rc}^{\k,K^\circ}(E),\chi).
\ee

\begin{prpt}\label{commb}
Assume \eqref{q2} holds and $L$ is connected. Then the diagram
 \begin{equation}\label{cdk00}
 \begin{CD}
            \Pi_{\overline{ \q_\rc}, L_\rc}^{\k,K^\circ}(E) @>\beta(\phi) >> \Pi_{\bar \q, L_\rc}^{\g,K}(X) \\
            @VV \Pi_{\overline{\q_\rc}, L_\rc}^{\k,K^\circ}(\varphi\circ \phi) V           @VV\Pi_{\bar \q, L_\rc}^{\g,K}(\varphi) V\\
           \chi @= \chi\\
  \end{CD}
\end{equation}
commutes for all $\phi\in \Hom_{L_\rc}(E,X)$, and $\varphi\in \Hom_{\l\cap \h,L_\rc\cap H_\rc}(X,\chi_{L\cap H})$.

\end{prpt}
\begin{proof}
This is routine to check. We omit the details.
\end{proof}

\section{A proof of Theorem \ref{main}}\label{secpr}

In this section, suppose $G$ is connected and compact. Then $K=G$, $H_\rc=H$, $\q_\rc=\q$, $\n_\rc=\n$ and $L=L_\rc=L_\rc^\circ$.

Recall that we assume
\be\label{qhg22}
  \q+\h=\g \quad\textrm{and}\quad \q\cap \h=\bar \q\cap \h.
\ee
Our goal is to prove the following Theorem (Theorem \ref{main} of the Introduction).
\begin{thm}\label{mainp}

If $G$ is connected and compact, then the linear map
\be\label{sur1}
 \Pi_{\bar \q, L}^{\g,G}:    \Hom_{L\cap H}(X,\chi_{L\cap H})\rightarrow  \Hom_{H}(\Pi_{\bar \q, L}^{\g,G}(X),\chi)
\ee
is surjective for every $L$-module $X$.
\end{thm}

\subsection{The $G$-module $\Pi_{\bar \q, L}^{\g,G}(X)$}

Recall the $(\g,L)$-module
\[
 V=\wedge^{2S} \g/\l\otimes (\oU(\g)\otimes_{\oU(\bar \q)}X),
  \]
and write
\[
   \widetilde \Pi_{\bar \q, L}^{\g,G}(X):= \oH^S(\g,L; \,\C[G]\otimes V).
\]
It is isomorphic to $\ \Pi_{\bar \q, L}^{\g,G}(X)$ by  the Zuckerman duality isomorphism. The last two arrows of \eqref{defpiphi}  yield a linear map
\be\label{sur2}
 \widetilde \Pi_{\bar \q, L}^{\g,G}:    \Hom_{L\cap H}(X,\chi_{L\cap H})\rightarrow  \Hom_{H}(\widetilde \Pi_{\bar \q, L}^{\g,G}(X),\chi).
\ee
It is clear that the map \eqref{sur1} is surjective if and only if so is the map \eqref{sur2}.

Without loss of generality, assume that the $L$-module $X$ is irreducible. If $\widetilde \Pi_{\bar \q, L}^{\g,G}(X)$ is zero, then Theorem \ref{mainp} is trivial. So we assume it is nonzero.
Then by \eqref{bbw}, $\widetilde \Pi_{\bar \q, L}^{\g,G}(X)$ is isomorphic to an irreducible $G$-module $\tau$ such that
\[
\tau^{\n}\cong  X\otimes \wedge^{S} \overline{\n}.
\]

View $\C[G]$ as a $G\times G$-module such that the first factor acts through the left translation, and the second factor acts through the right translation.  View $\tau^\vee\otimes \tau$ as a $G\times G$-submodule  of $\C[G]$ via the linear embedding
\[
 \begin{array}{rcl}
 \tau^\vee\otimes \tau &\hookrightarrow &\C[G],\\
  \lambda\otimes u&\mapsto& (g\mapsto \la \lambda, g.u\ra ).
  \end{array}
\]
Since $ \oH^S(\g,L; \,\C[G]\otimes V)\cong \tau$ is irreducible, Peter-Weyl's Theorem implies that we have  an identification
\be\label{iden11}
 \oH^S(\g,L; \,\tau^\vee\otimes \tau \otimes V) = \oH^S(\g,L; \,\C[G]\otimes V).
\ee
\begin{lemt}\label{cycle}
There is an identification
\be\label{iden22}
\Hom_L(\wedge^S \g/\bar \q, (\tau^\vee)^{\bar \n}\otimes (\wedge^{2S} \g/\l)\otimes X)=\oH^S(\g,L; \,\tau^\vee\otimes V)
\ee
of one-dimensional vector spaces. \end{lemt}
\begin{proof}
This is known to experts. We sketch a proof for the convenience of the reader. Fix an element $x_0$ in the center of $\l$ such that all the eigenvalues of the operator $[x_0, \,\cdot\,] : \n\rightarrow \n$ are positive real numbers. By considering the action of $x_0$, we know that
\[
  \Hom_L(\wedge^j\g/\l, \tau^\vee\otimes V)=\left\{\begin{array}{ll}
                              \{0\},&\textrm{if }j\neq S,\\
                              \Hom_L(\wedge^S \g/\bar \q, (\tau^\vee)^{\bar \n}\otimes (\wedge^{2S} \g/\l)\otimes X),&\textrm{if }j= S.
                              \end{array}
                              \right.
\]
Hence the lemma follows.

\end{proof}

By \eqref{iden11} and \eqref{iden22}, we have  identifications
\begin{eqnarray}\label{iden33}
\widetilde \Pi_{\bar \q, L}^{\g,G}(X)&=&\Hom_L(\wedge^S \g/\bar \q, (\tau^\vee)^{\bar \n}\otimes (\wedge^{2S} \g/\l)\otimes X)\otimes \tau\\
\nonumber  &=&\Hom_L(\wedge^S \g/\bar \q, (\tau^\vee)^{\bar \n}\otimes (\wedge^{2S} \g/\l)\otimes X\otimes \tau)
\end{eqnarray}
of $G$-modules. Here we let $L$ act trivially on the $G$-module $\tau$.

\subsection{The linear functional $\widetilde \Pi_{\bar \q, L}^{\g,G}(\varphi)$}

Recall that
\[
  \varphi\in \Hom_{L\cap H}(X, \chi_{L\cap H}),
\]
where
\[
  \chi_{L\cap H}=\wedge^{2S}(\g/\l)^\vee\otimes \wedge^S \h/(\l\cap \h) \otimes \chi,
\]
to be viewed as an $(L\cap H)$-module.
By tensoring with $\wedge^{2S}\g/\l$, it obviously induces an $(L\cap H)$-equivariant linear map
\be\label{phip0}
   \varphi': (\wedge^{2S} \g/\l)\otimes X\rightarrow \wedge^S \h/(\l\cap \h) \otimes \chi.
\ee
Define a linear map
\[
  \begin{array}{rcl}
\widetilde \varphi: (\tau^\vee)^{\bar \n}\otimes (\wedge^{2S} \g/\l)\otimes X\otimes \tau&\rightarrow & \wedge^S \h/(\l\cap \h)\otimes \chi, \medskip\\
  \lambda \otimes \omega\otimes x \otimes u &\mapsto &\varphi'(\omega\otimes x ) \int_{H} \chi^{-1}(h) \,\la \lambda, h.u\ra \od\!h,
\end{array}
\]
where $\od\! h$ is the Haar measure on $H$ with total volume $1$.

Write
\be\label{idchi}
  \chi=\Hom_{L\cap H}(\wedge^S \h/(\l\cap \h), \wedge^S \h/(\l\cap \h)\otimes \chi).
\ee
Here we let $L\cap H$ act trivially on the $H$-module $\chi$.
Write
\[
  \iota_\h:  \wedge^S \h/(\l\cap \h)\rightarrow \wedge^S \g/\bar \q
\]
for the obvious linear map. This is a linear  isomorphism by  \eqref{qhg22}. The following lemma is routine to check.
\begin{lemt}\label{lemi}
Under the identifications \eqref{iden33} and \eqref{idchi}, the linear map
\[
 \widetilde \Pi_{\bar \q, L}^{\g,G}(\varphi): \widetilde \Pi_{\bar \q, L}^{\g,G}(X)\rightarrow \chi
\]
is identical to the following linear map:
\[
  \begin{array}{rcl}
  \Hom_L(\wedge^S \g/\bar \q, (\tau^\vee)^{\bar \n}\otimes (\wedge^{2S} \g/\l)\otimes X\otimes \tau)&\rightarrow & \Hom_{L\cap H}(\wedge^S \h/(\l\cap \h), \wedge^S \h/(\l\cap \h)\otimes \chi),\smallskip \\
 f &\mapsto & \widetilde \varphi \circ f\circ \iota_\h.
\end{array}
\]
\end{lemt}

Fix an isomorphism $X\cong \tau^{\n}\otimes \wedge^S \g/\bar \q$. Identify $\wedge^S \h/(\l\cap \h)$ with $\wedge^S \g/\bar \q$ via $\iota_\h$.  Then the linear map $\varphi'$ of \eqref{phip0} obviously induces an $(L\cap H)$-equivariant linear map
\[
   \varphi'': (\wedge^{2S} \g/\l)\otimes \tau^\n\rightarrow  \chi.
\]

It is easy to see that the latter map in Lemma \ref{lemi} is identical to the following linear map:
\[
    \begin{array}{rcl}
 ( (\tau^\vee)^{\bar \n}\otimes (\wedge^{2S} \g/\l)\otimes \tau^\n )^L\otimes \tau&\rightarrow &\chi,\smallskip \\
  \lambda \otimes \omega\otimes y \otimes u &\mapsto &\varphi''(\omega\otimes y) \int_{H} \chi^{-1}(h) \,\la \lambda, h.u\ra \od\!h.
\end{array}
\]
Here and as usual, a superscript group indicates the fixed vectors of the group action.

Note that $\wedge^{2S} \g/\l$ is a trivial $L$-module. Thus in order to finish the proof of Theorem \ref{mainp}, it suffices to prove the following proposition.
\begin{prpt}\label{prpsur}
The linear map
\[
\begin{array}{rcl}
  \Hom_{L\cap H}(\tau^\n, \chi)&\rightarrow &\Hom_H(((\tau^\vee)^{\bar \n}\otimes \tau^\n)^L\otimes \tau, \chi),\medskip \\
  \phi&\mapsto&\left(\lambda\otimes y\otimes u\mapsto \phi(y) \int_{H} \chi^{-1}(h) \,\la \lambda, h.u\ra \od\!h \right)
  \end{array}
\]
is surjective.
\end{prpt}

\subsection{A proof of Proposition \ref{prpsur}}

Write $\tau(\chi)$ for the $\chi$-isotypic subspace of $\tau$, and write $p_\chi: \tau\rightarrow \tau(\chi)$ for the $H$-equivariant projection map. Then the linear map in Proposition \ref{prpsur}  equals the following one:
\be\label{home1}
\begin{array}{rcl}
  \Hom_{L\cap H}(\tau^\n, \chi)&\rightarrow &\Hom_H(((\tau^\vee)^{\bar \n}\otimes \tau^\n)^L\otimes \tau, \chi)\medskip \\
  \phi&\mapsto&\left(\lambda\otimes y\otimes u\mapsto \phi(y) \la \lambda, p_\chi(u)\ra \right).
  \end{array}
\ee

Take a basis $(\lambda_1, \lambda_2, \cdots, \lambda_r)$ of $(\tau^\vee)^{\bar \n}$ and  $(y_1, y_2, \cdots, y_r)$ of $\tau^{ \n}$ which are dual to each other. Then  $((\tau^\vee)^{\bar \n}\otimes \tau^\n)^L$ is spanned by $\sum_{i=1}^r \lambda_i\otimes u_i$. Using this generator we identify $((\tau^\vee)^{\bar \n}\otimes \tau^\n)^L$ with $\C$ so that
\[
  ((\tau^\vee)^{\bar \n}\otimes \tau^\n)^L\otimes \tau=\tau.
\]
Then the linear map \eqref{home1} equals the composition of the linear map
\be\label{home2}
 \begin{array}{rcl}
  \Hom_{L\cap H}(\tau^\n, \chi)&\rightarrow &\chi\otimes (\tau^\vee)^{\bar \n}, \medskip \\
  \phi&\mapsto&\sum_{i=1}^r  \phi(y_i) \otimes \lambda_i
  \end{array}
\ee
and the linear map
\be\label{home3}
\begin{array}{rcl}
 \chi\otimes (\tau^\vee)^{\bar \n}&\rightarrow &\Hom_H(\tau, \chi), \\
 a\otimes \lambda &\mapsto&\left(u\mapsto a \la \lambda, p_\chi(u)\ra \right).
  \end{array}
\ee
Thus Proposition \ref{prpsur} follows from the following Lemma \ref{sur01} and Lemma \ref{sur02}.

\begin{lemt}\label{sur01}
The image of the linear map \eqref{home2} equals
 $(\chi\otimes (\tau^\vee)^{\bar \n})^{L\cap H}$.
\end{lemt}
\begin{proof}
Note that \eqref{home2} is a restriction of the obvious linear isomorphism
\[
\Hom_{\C}(\tau^\n, \chi)\xrightarrow{\,\cong\,} \chi\otimes (\tau^\vee)^{\bar \n}.
\]
This linear isomorphism is $H$-equivariant, and hence the lemma follows.
\end{proof}

\begin{lemt}\label{sur02}
The restriction of the linear map \eqref{home3} to $(\chi\otimes (\tau^\vee)^{\bar \n})^{L\cap H}$ is surjective.
\end{lemt}
\begin{proof}
By the $(L\cap H)$-invariance of the linear map  \eqref{home3},  it suffices to show that the map  \eqref{home3} itself is surjective.

Note that  $$\Hom_H(\tau, \chi)=\Hom_\C(\tau(\chi), \chi)=\chi\otimes (\tau( \chi))^\vee,$$ and   \eqref{home3} is identical to the following linear map:
\be\label{home4}
\begin{array}{rcl}
 \chi\otimes (\tau^\vee)^{\bar \n}&\rightarrow &\chi\otimes (\tau( \chi))^\vee, \\
 a\otimes \lambda &\mapsto& a\otimes \lambda|_{\tau( \chi)}.
  \end{array}
\ee

Using the first equality of \eqref{qhg22}, we have that
\begin{eqnarray*}
  \oU(\g). (\tau(\chi)\cap (\bar \n. \tau))&= & \oU(\bar \q). (\oU(\h). (\tau(\chi)\cap ( \bar \n. \tau)))\\
  &=& \oU(\bar \q).(\tau(\chi)\cap (\bar \n. \tau))\\
  &\subset & \bar \n. \tau\subsetneq \tau.
\end{eqnarray*}
Thus the irreducibility of $\tau$ implies that $\tau(\chi)\cap (\bar \n. \tau)=\{0\}$. Consequently, the natural map
\[
  \tau(\chi)\rightarrow \tau/(\bar \n. \tau)
\]
is injective. Taking the transpose, we know that the map
\[
\begin{array}{rcl}
(\tau^\vee)^{\bar \n}&\rightarrow &(\tau( \chi))^\vee, \\
  \lambda &\mapsto& \lambda|_{\tau( \chi)}
    \end{array}
\]
is surjective. Thus the map \eqref{home4} is also surjective and the lemma is proved.

\end{proof}

\section{The first application}\label{firsta}

In this section and the next one, we retain the notation of the Introduction.
 As applications of our construction, we give two arithmetically interesting examples of cohomological test vectors.  For the first one, suppose $(G, H, \theta, \sigma)$ and  $\K$ are as in Example \ref{exm12} so that
 \[
 G=\GL_n(\K), \quad H=\GL_n(\R), \quad K=\oU(n)\quad \textrm{and}\quad  H_\rc=\oO(n).
 \] 
Write $\iota_1, \iota_2:\K\rightarrow \C$ for the two distinct isomorphisms.

\subsection{Cohomological test vectors}\label{secct1}

 Fix a sequence
\be\label{mu1}
  \mu=(\mu_1\geq \mu_2\geq \cdots \geq \mu_n;\, \mu_{n+1}\geq \mu_{n+2}\geq \cdots \geq \mu_{2n} )\in \Z^{2n}
\ee
such that
\begin{equation}
  \mu_{1}+\mu_{2n}=\mu_{2}+\mu_{2n-1}=\cdots=\mu_{n}+\mu_{n+1}=0.
\end{equation}

Write $\kappa: \K^\times\rightarrow  \C^\times$ for the unitary character whose square equals $\frac{(\iota_1)|_{\K^\times}}{(\iota_2)|_{\K^\times}}$. 
Define the principal series representation
\be\label{defind}
  \pi_\mu:=\Ind_{\mathrm B_n(\K)}^{\GL_n(\K)}\left(\kappa^{2\mu_1+n-1}\otimes \kappa^{2\mu_2+n-3}\otimes \cdots \otimes \kappa^{2\mu_n+1-n}\right),
\ee
where $\mathrm B_n(\K)$ denotes the Borel subgroup of the upper-triangular matrices. This is an irreducible Casselman-Wallach representation of $\GL_n(\K)$ (\cite{Vog0}, see also \cite[Theorem 1.2]{Bar}).

Denote by $F_\mu$ the irreducible algebraic representation of $\GL_n(\C)\times \GL_n(\C)$ of highest weight $\mu$. It is also viewed as an irreducible representation of  $\GL_n(\K)$ by restricting through the complexification map
\begin{equation}\label{compl}
  \GL_n(\K)\rightarrow \GL_n(\C)\times \GL_n(\C),\quad g\mapsto (\iota_1(g), \iota_2( g)).
\end{equation}

By Vogan-Zuckerman theory of cohomological representations \cite{VZ},  $\pi_\mu$  is the unique (up to isomorphism) irreducible Casselman-Wallach representation  of $\GL_n(\K)$ which is unitarizable and tempered, and whose total relative Lie algebra cohomology is nonzero (see \cite[Section 3]{Clo}):
                           \begin{equation}\label{cohome}
                             \oH^*(\gl_{n}(\C)\times \gl_n(\C),\mathrm{U}(n);
                           F_\mu^\vee \otimes \pi_\mu)\neq 0.
                            \end{equation}
Here  $\gl_{n}(\C)\times \gl_n(\C)$ is identified with $\g$  through the differential of \eqref{compl}.

Recall that a result of Vogan \cite[Theorem 4.9]{Vog0} asserts that every irreducible Casselman-Wallach representation of $\GL_n(\K)$ has a unique minimal $\oU(n)$-type, and it occurs with multiplicity one.
Likewise, every irreducible Casselman-Wallach representation of $\GL_n(\R)$ has a unique minimal $\oO(n)$-type, and it occurs with multiplicity one.

Whenever a Lie group has exactly two connected components, we use $\sgn$ to denote the unique non-trivial quadratic character of it. We will prove the following Theorem in the next subsection.

\begin{thm}\label{test1}
The space $\Hom_{\GL_n(\R)}(\pi_\mu,\sgn^{n-1})$ is one-dimensional, and a nonzero element of it does not vanish on the minimal $\oU(n)$-type of $\pi_\mu$.

\end{thm}

\begin{remarks}
(a) Using the explicit description of $\pi_\mu$ in \eqref{defind}, the analytic method of meromorphic continuations also produces a nonzero element of $\Hom_{\GL_n(\R)}(\pi_\mu,\sgn^{n-1})$ (\cf \cite[Theorem 6]{Osh}, \cite[Theorem 5.1]{Ola} and \cite[Theorem 5.10]{Ban}). But it  seems hard to show the non-vanishing on the minimal $\oU(n)$-type  by this analytic method.

(b)  When $n\geq 2$, some matrix coefficients of $\pi_\mu$ are not integrable on $\SL_n(\R)$. Thus the method of \cite[Lemma 4.5]{Sun}, which proves a similar result in the Gross-Prasad case by matrix coefficient integrals, does not apply to Theorem \ref{test1}.

\end{remarks}

\subsection{A proof of Theorem \ref{test1}}\label{pfirst}
Note that $\g=\gl_n(\C)\times \gl_n(\C)$ and the complexified differential of $\theta: G\rightarrow G$ equals
\[
  \theta: \gl_n(\C)\times \gl_n(\C)\rightarrow \gl_n(\C)\times \gl_n(\C),\quad (x,y)\mapsto (-y^{\mathrm t}, -x^{\mathrm t}).
\]
Suppose  \[
\q=\b_n\times \b_n^\mathrm t\subset \g=\gl_n(\C)\times \gl_n(\C),
 \]
where $\b_n$ denotes the Lie algebra of all upper-triangular matrices in $\gl_n(\C)$. Then $\q$ is a $\theta$-stable parabolic subalgebra of $\g$ satisfying \eqref{q}, and
\[
   (L,\,L_\rc,\,L\cap H,\,L_\rc\cap H_\rc)=((\K^\times)^n,\, (\oU(1))^n,\, (\R^\times)^n,\,\{\pm 1\}^n).
\]

Using the isomorphisms $\iota_1, \iota_2: \K\rightarrow \C$, we also have an identification $$\l=\C^n\times \C^n.$$
 Denote by $\lambda_\mu$ the unitary character of $L$ whose complexified differential equals
\[
   (\mu_1+n-1, \mu_2+n-3,\cdots, \mu_n+1-n; \mu_{2n}+1-n, \mu_{2n-1}+3-n, \cdots, \mu_{n+1}+n-1).
\]
Then by Vogan-Zuckerman theory \cite{VZ}, $\Pi_{\bar \q, L_\rc}^{\g,K}(\lambda_\mu)$ is isomorphic to the $(\g,K)$-module of $K$-finite vectors in $\pi_\mu$, and the irreducible representation $\tau_\mu:=\Pi_{\overline{\q_\mathrm c}, L_\rc}^{\k,K}(\lambda_\mu)$ of $K$ occurs with multiplicity one in $\pi_\mu$ (it is the unique minimal $K$-type of $\pi_\mu$). Identify $\K$ with $\C$ via $\iota_1$, then $\tau_\mu$ has highest weight
\[
  (2\mu_1+n-1, 2\mu_2+n-3, \cdots, 2\mu_n+1-n).
\]

\begin{lemt}\label{hegc10}
One has that
\[
\dim \Hom_{H_\rc}(\tau_\mu, \sgn^{n-1})=1.
\]
\end{lemt}
\begin{proof}
This is an instance of Cartan-Helgason Theorem (\cf \cite[Chapter V, Theorem 4.1]{Hel}).
\end{proof}

\begin{lemt}\label{existphi01}
There is an element of $\Hom_{\h,H_\rc}(\Pi_{\bar \q, L_\rc}^{\g,K}(\lambda_\mu), \sgn^{n-1})$ which does not vanish on the minimal $K$-type $\tau_\mu$ of $\Pi_{\bar \q, L_\rc}^{\g,K}(\lambda_\mu)$.
\end{lemt}
\begin{proof}
Note that the one-dimensional representations
\[
  (\lambda_\mu)|_{L\cap H}\quad \textrm{and}\quad \chi_{L\cap H}\cong \sgn^{n-1}|_{L\cap H}\otimes \wedge^{\mathrm{top}} (\h_\rc /(\l_\rc\cap \h_\rc))
\]
of $L\cap H$ are both trivial. Thus Lemma \ref{hegc10} implies that
\[
 \dim \Hom_{ L_\rc\cap H_\rc }(\lambda_\mu,\chi_{L\cap H})=\dim \Hom_{H_\rc}(\tau_\mu, \sgn^{n-1}).
\]
Since $$
 \Hom_{\l\cap \h, L_\rc\cap H_\rc}(\lambda_\mu, \chi_{L\cap H})\neq \{0\},
$$
 the lemma follows by Theorem \ref{mainf}.
\end{proof}
\begin{lemt}\label{mulone1}
For every irreducible Casselman-Wallach representation $\pi$ of $\GL_n(\K)$, and every character $\chi_0$ of $\GL_n(\R)$, one has that
\[
  \dim \Hom_{\GL_n(\R)}(\pi, \chi_0)\leq 1.
\]
\end{lemt}
\begin{proof}
When $\chi_0$ is trivial, the lemma is proved in \cite[Theorem 8.2.5]{AGS}.  In general, since $\chi_0$ extends to a character of $\GL_n(\K)$, the lemma reduces to the case when $\chi_0$ is trivial.
\end{proof}

Now Theorem \ref{test1} follows by combining Theorem \ref{autcon0}, Lemma \ref{existphi01} and Lemma \ref{mulone1}.

\subsection{The  non-vanishing hypothesis for Asai L-functions}

Write
\[
  \GL_n(\K)= \GL_n^1(\K) \times \R_+^\times,
\]
where $\GL^1_n(\K):=\{g\in\GL_n(\K)\mid \iota_1(\det g)\cdot \iota_2(\det g)=1\}$, and the group $ \R_+^\times$ of positive real numbers is viewed as a subgroup of $\GL_n(\K)$ via the diagonal embedding. Following \cite{GHL}, write $\m_G$ for the complexified Lie algebra of $\GL^1_n(\K)$.


Put $t_n:=\frac{(n-1)(n+2)}{2}$. Recall that  \cite[Lemma 3.14]{Clo}
\[
\dim \oH^{t_n}(\m_G,\mathrm{U}(n);
                           F_\mu^\vee \otimes \pi_\mu)=1,
\]
and
\[
  \oH^{j}(\m_G,\mathrm{U}(n);
                           F_\mu^\vee \otimes \pi_\mu)=\{0\}\qquad \textrm{for all $j>t_n$}.
\]
Note that
\[
  \dim \Hom_{\GL_n(\R)}(F_\mu^\vee,\C)=1 \quad\textrm{and}\quad \dim \oH^{t_n}(\sl_{n}(\C),\mathrm{O}(n);
                           \sgn^{n-1})=1.
                           \]

\begin{thm}\label{nv1}
Let $\varphi$ be a nonzero element of $\Hom_{\GL_n(\R)}(\pi_\mu,\sgn^{n-1})$, and let $\psi$ be a nonzero element of $\Hom_{\GL_n(\R)}(F_\mu^\vee,\C)$. Then by restriction of cohomology, the linear functional $\psi\otimes \varphi: F_\mu^\vee \otimes \pi_\mu\rightarrow \sgn^{n-1}$ induces a nonzero linear map
\[
   \oH^{t_n}(\m_G,\mathrm{U}(n);
                           F_\mu^\vee \otimes \pi_\mu)\rightarrow  \oH^{t_n}(\sl_{n}(\C),\mathrm{O}(n);
                           \sgn^{n-1})
\]
of one-dimensional vector spaces.
\end{thm}

Theorem \ref{nv1} is a representation theoretic reformulation of the non-vanishing hypothesis of Grobner-Harris-Lapid in the study of non-critical values of the Asai L-function (see \cite[Section 6.2]{GHL}).

In view of Theorem \ref{test1}, Theorem \ref{nv1}  follows by applying Theorem \ref{nonvmod} of Appendix \ref{appendix} to the group $\GL_n^1(\K)$.

\section{The second application}\label{seconda}

For the second application, suppose $(G, H, \theta, \sigma)$ is as in Example \ref{exm13} so that
\[
 G=\GL_{2n}(\R), \quad H=\GL_n(\R)\times \GL_n(\R), \quad K=\oO(2n)\quad \textrm{and}\quad  H_\rc=\oO(n)\times \oO(n).
 \]

\subsection{Cohomological test vectors}
Fix a sequence
\[
  \nu=(\nu_1\geq \nu_2\geq \cdots \geq \nu_{2n} )\in \Z^{2n}\qquad (n\geq 1)
\]
such that
\begin{equation}\label{w}
  \nu_{1}+\nu_{2n}=\nu_{2}+\nu_{2n-1}=\cdots=\nu_{n}+\nu_{n+1}=w
\end{equation}
for some $w\in \Z$.
Define
\[
  \pi^\nu:=\abs{\det}^{\frac{w}{2} }\otimes \Ind_{P_{2,2,\cdots, 2}}^{G} \left(D_{\nu_1-\nu_{2n}+2n-1} \widehat{\otimes} D_{\nu_2-\nu_{2n-1}+2n-3} \widehat{\otimes} \cdots \widehat{\otimes} D_{\nu_n-\nu_{n+1}+1} \right),
\]
where   $D_k$ ($k\geq 1$) is a relative discrete series representation as in Example \ref{exa110}.  Similar to \eqref{defind}, $\pi^\nu$ is an irreducible Casselman-Wallach representation of $\GL_{2n}(\R)$.

Denote by $F^\nu$ the irreducible algebraic representation of $\GL_{2n}(\C)$ of highest weight $\nu$. It is also viewed as an irreducible representation of $\GL_{2n}(\R)$ by restriction. Similar to Section \ref{secct1}, $\pi^\nu$ is the unique  (up to isomorphism) irreducible Casselman-Wallach representation  of $\GL_{2n}(\R)$ such that \cite[Section 3]{Clo}
\begin{itemize}
                           \item
                            $\pi^\nu|_{{\operatorname{SL}_{2n}^{\pm}}(\R)}$ is unitarizable and
                            tempered, and
                           \item the total relative Lie algebra cohomology
                           \begin{equation}\label{cohom}
                             \oH^*(\gl_{2n}(\C),\SO(2n);
                           (F^\nu)^\vee\otimes \pi^\nu)\neq 0,
                           \end{equation}
                             \end{itemize}
where
\[
  {\operatorname{SL}_{2n}^{\pm}}(\R):=\{g\in \GL_{2n}(\R)\mid \det(g)=\pm 1\}.
  \]

 Suppose
\be\label{defchi12}
  \chi=\chi_1\otimes \chi_2: \GL_n(\R)\times \GL_n(\R)\rightarrow \C^\times
  \ee
   is a character such that
\[
 \chi_1\cdot \chi_2={\det}^{w}.
  \]
 For each $s\in \C$, let $\abs{\det}^{s,-s}$ denotes the character $\abs{\det}^s\otimes \abs{\det}^{-s}$ of $\GL_n(\R)\times \GL_n(\R)$.
The following Theorem will be proved in the next subsection.


\begin{thm}\label{test2}
 Up to scalar multiplication, there exists a unique nonzero element $\varphi\in \Hom_{\GL_n(\R)\times \GL_n(\R)}(\pi^\nu,\chi)$ which extends to a holomorphic family in the following sense: there exists a map
 \[
  \zeta:  \pi^\nu\times \C\rightarrow \C
 \]
 such that
 \begin{itemize}
   \item $\zeta(\,\cdot\,, s)\in \Hom_{\GL_n(\R)\times \GL_n(\R)}(\pi^\nu, \chi\cdot \abs{\det}^{s,-s})$,  for all $s\in \C$;
   \item $\zeta(v,\,\cdot\,)$ is an entire function, for all $\oO(2n)$-finite vector $v\in \pi^\nu$;
   \item $\zeta(\,\cdot\,,0)=\varphi$.
 \end{itemize}
 Moreover, $\varphi$ does not vanish on the minimal $\oO(2n)$-type of $\pi^\nu$.
 \end{thm}

Similar to Theorem \ref{test1}, a nonzero $\chi$-invariant continuous linear functional on $\pi^\nu$ may also be constructed by the analytic method of meromorphic continuations. But the non-vanishing on the minimal $\oO(2n)$-type is not easy to show by that method.

\subsection{A proof of Theorem \ref{test2}}


Fix an embedding
\begin{equation}\label{gamma}
  \gamma_{2n}: (\C^{\times})^{2n}\hookrightarrow \GL_{2n}(\C)
\end{equation}
of algebraic groups which sends $(a_1,a_2,\cdots,a_{2n})$ to the
matrix
\[
\left[
  \begin{array}{cccccccc}
    \frac{a_1+a_{2n}}{2} & 0 & \cdots & 0 & 0 &\cdots & 0 & \frac{a_1-a_{2n}}{2\mathbf i}  \\
     0&\frac{a_2+a_{2n-1}}{2}  & \cdots & 0 & 0 &\cdots & \frac{a_2-a_{2n-1}}{2\mathbf i}  & 0 \\
     \cdots & \cdots & \cdots & \cdots & \cdots &\cdots & \cdots &\cdots  \\
   0 & 0 & \cdots & \frac{a_n+a_{n+1}}{2} & \frac{a_n-a_{n+1}}{2\mathbf i} &\cdots & 0 & 0  \\
   0 & 0 & \cdots & \frac{a_{n+1}-a_n}{2\mathbf i}  & \frac{a_{n+1}+a_n}{2} &\cdots & 0 & 0  \\
  \cdots & \cdots & \cdots & \cdots & \cdots &\cdots & \cdots &\cdots  \\
   0&\frac{a_{2n-1}-a_2}{2\mathbf i}  & \cdots & 0 & 0 &\cdots & \frac{a_{2n-1}+a_2}{2}  & 0 \\
   \frac{a_{2n}-a_1}{2\mathbf i} & 0 & \cdots & 0 & 0 &\cdots &  & \frac{a_{2n}+a_1}{2}  \\
  \end{array}
\right],
\]
where $\mathbf i=\sqrt{-1}\in \C$ is the fixed square root of $-1$.
View $(\C^\times)^{2n}$ as a Cartan subgroup of $\GL_{2n}(\C)$ via
the embedding \eqref{gamma}. Then the corresponding root system is
\begin{equation}\label{phi2n0}
  \{\pm (e_i-e_j)\mid 1\leq i<j\leq 2n\}\subset
\Z^{2n}.
\end{equation}
Here $e_1, e_2,\cdots, e_{2n}$ denote the standard basis of $\Z^{2n}$.
Suppose $\q$ is the Borel subalgebra  of $\g$ which corresponds to
the positive system
\begin{equation}\label{phi2np}
  \{e_i-e_j\mid 1\leq i<j\leq 2n\}\subset
\Z^{2n}
\end{equation}
of \eqref{phi2n0}. Then $\q$ is a $\theta$-stable parabolic subalgebra of $\g$ satisfying \eqref{q}, and
\[
  (L,\,L_\rc,\,L\cap H,\,L_\rc\cap H_\rc)=((\C^\times)^n, \,(\mathbb{S}^1)^n,\,(\R^\times)^n,\,\{\pm 1\}^n),
\]
where $\mathbb S^1$ denotes the group of complex numbers of modulus one. Here $L$ is viewed as a subgroup of $G$ via the embedding
\[
   \begin{array}{rcl}
   (\C^\times)^n&\rightarrow& \GL_{2n}(\R),\\
   (a_1,a_2,\cdots, a_n)&\mapsto&\gamma_{2n}(a_1,a_2,\cdots, a_n,\bar a_n,\cdots,  \bar
  a_2,\bar a_1).
  \end{array}
\]

 Denote by $\lambda^\nu$ the restriction to $L$ of the character
\[
  (\nu_1+2n-1, \nu_2+2n-3, \cdots, \nu_{2n}+1-2n)
\]
of $(\C^\times)^{2n}$, through the embedding
\[
   \begin{array}{rcl}
   L=(\C^\times)^n&\rightarrow& (\C^\times)^{2n},\\
   (a_1,a_2,\cdots, a_n)&\mapsto&(a_1,a_2,\cdots, a_n,\bar a_n,\cdots,  \bar
  a_2,\bar a_1).
  \end{array}
\]
Then by Vogan-Zuckerman theory \cite{VZ}, we know that $\Pi_{\bar \q, L_\rc}^{\g,K}(\lambda^\nu)$ is isomorphic to the $(\g,K)$-module of $K$-finite vectors in $\pi^\nu$.  Moreover, the irreducible $K^\circ$-module $\tau^\nu:=\Pi_{\overline{\q_\mathrm c}, L_\rc}^{\k,K^\circ}(\lambda^\nu)$ occurs with multiplicity one in $\pi^\nu$ (it is contained in the unique minimal $K$-type of $\pi^\nu$).

Note that $L_\rc$ is a Cartan subgroup of $K^\circ$, and $\q_\mathrm c$ is a Borel subalgebra of $\k$ which corresponds to the positive system
\[
  \{e_i\pm e_j\mid 1\leq i<j\leq n\}\subset \Z^n=\Hom(L_\rc, \C^\times)
\]
of the root system of $K^\circ$. The highest weight of $\tau^\nu$ is
\[
  (\nu_1-\nu_{2n}+2n, \nu_2-\nu_{2n-1}+2(n-1),\cdots, \nu_n-\nu_{n+1}+2).
\]

Note that $H\cap K^\circ$ equals
\[
  \oO(n)\times_{\{\pm 1\}} \oO(n)\quad (\textrm{the fiber product over the  determinant homomorphisms}).
  \]
Thus it  has exactly two connected components. Recall the integer $w$ from \eqref{w}.
  \begin{lemt}\label{hc2}
  One has that
  \[
    \dim \Hom_{H\cap K^\circ}(\tau^\nu, \sgn^w)=1.
  \]
  \end{lemt}
  \begin{proof}
This is also an instance of Cartan-Helgason Theorem (\cf \cite[Chapter V, Theorem 4.1]{Hel}).
\end{proof}

\begin{lemt}\label{existphi1}
There is a map
 \be\label{zetapi}
  \zeta_0:  \Pi_{\bar \q, L_\rc}^{\g,K}(\lambda^\nu)\times \C\rightarrow \C
 \ee
 with the following properties:
  \begin{itemize}
   \item $\zeta_0(\,\cdot\,, s)\in \Hom_{\h,H_\rc}(\Pi_{\bar \q, L_\rc}^{\g,K}(\lambda^\nu), \chi\cdot \abs{\det}^{s,-s})$,  for all $s\in \C$;
   \item $\zeta_0(v,\,\cdot\,)$ is a polynomial function, for all $v\in \Pi_{\bar \q, L_\rc}^{\g,K}(\lambda^\nu)$;
   \item $\zeta_0(\,\cdot\, , s)$ does not vanish on $\tau^\nu\subset \Pi_{\bar \q, L_\rc}^{\g,K}(\lambda^\nu)$, for all $s\in \C$.
 \end{itemize}
\end{lemt}
\begin{proof}
Note that the $(L_\rc\cap H_\rc)$-action on $\wedge^{S} \h_\rc/(\l_\rc \cap \h_\rc)$ is trivial, and  the character $\lambda^\nu|_{L\cap H}$ and $(\chi\cdot \abs{\det}^{s,-s})|_{L\cap H}$ ($s\in \C$) are both equals to the following one:
\[
  (a_1, a_2, \cdots, a_n)\mapsto (a_1 a_2 \cdot\cdots \cdot a_n)^w.
\]

Fix a nonzero element
\[
  \varphi\in \Hom_{L\cap H}(\lambda^\nu, \wedge^{2S}(\k/\l_\rc)^\vee\otimes \wedge^S \h_\rc/(\l_\rc \cap \h_\rc) \otimes \chi).
\]
For all $s\in \C$, let
\[
  \varphi_s\in \Hom_{L\cap H}(\lambda^\nu, \wedge^{2S}(\k/\l_\rc)^\vee\otimes \wedge^S \h_\rc/(\l_\rc \cap \h_\rc) \otimes (\chi\cdot \abs{\det}^{s,-s}))
\]
be the element which is identical to $\varphi$ when both $\chi$ and $\chi\cdot \abs{\det}^{s,-s}$ are identified with $\C$ as vector spaces.
Now we define a map
 \[
  \zeta_0:  \Pi_{\bar \q, L_\rc}^{\g,K}(\lambda^\nu)\times \C\rightarrow \C, \quad (v,s)\mapsto (\Pi_{\bar \q, L_\rc}^{\g,K}(\varphi_s))(v).
 \]
Then the first two properties of the lemma hold by the construction of $\Pi_{\bar \q, L_\rc}^{\g,K}(\varphi_s)$.
The third property holds by Proposition \ref{commb0}, Lemma \ref{hc2}, and Theorem \ref{main}.
\end{proof}

Identify $\pi^\nu$ with the Casselman-Wallach globalization of $\Pi_{\bar \q, L_\rc}^{\g,K}(\lambda^\nu)$. By the automatic continuity theorem (Theorem \ref{autcon0}), the map $\zeta_0$ of \eqref{zetapi} extends to a map
\[
  \zeta: \pi^\nu\times \C\rightarrow \C
\]
such that
 \begin{itemize}\label{item3z}
   \item $\zeta(\,\cdot\,, s)\in \Hom_{\GL_n(\R)\times \GL_n(\R)}(\pi^\nu, \chi\cdot \abs{\det}^{s,-s})$,  for all $s\in \C$;
   \item $\zeta(v,\,\cdot\,)$ is a polynomial function, for all $\oO(2n)$-finite vectors $v\in \pi^\nu$;
   \item $\zeta(\,\cdot\,,s)$ does not vanish on $\tau^\nu\subset \Pi_{\bar \q, L_\rc}^{\g,K}(\lambda^\nu)$, for all $s\in \C$.
 \end{itemize}
Then $\varphi:=\zeta(\,\cdot\, , 0)$ satisfies the conditions of Theorem \ref{test2}. Moreover, $\varphi$ does not vanish on the minimal $K$-type of $\pi^\nu$.

To prove the uniqueness of $\varphi$, recall the following multiplicity one result.

\begin{lemt}\label{cs}(\cite[Theorem B]{CS})
Let $\pi$ be an irreducible Casselman-Wallach representation of $\GL_{2n}(\R)$. Then for all but countably many characters $\chi'$ of $\GL_n(\R)\times \GL_n(\R)$, the space $\Hom_{\GL_n(\R)\times \GL_n(\R)}(\pi, \chi')$ is at most one-dimensional.
\end{lemt}

Now let $\zeta': \pi^\nu\times \C\rightarrow \C$ be a map such that
 \begin{itemize}\label{item3z}
   \item $\zeta'(\,\cdot\,, s)\in \Hom_{\GL_n(\R)\times \GL_n(\R)}(\pi^\nu, \chi\cdot \abs{\det}^{s,-s})$,  for all $s\in \C$;
   \item $\zeta'(v,\,\cdot\,)$ is an entire function, for all $\oO(2n)$-finite vectors $v\in \pi^\nu$.
   \end{itemize}
Pick a vector $v_0\in \tau^\nu\subset \pi^\nu$ which does not vanish under a nonzero element of $\Hom_{H\cap K^\circ}(\tau^\nu, \sgn^w)$. Then $\zeta(v_0, \cdot)$ is a nowhere vanishing polynomial  function, and is thus a nonzero constant. Put
\[
  \gamma(s):=\frac{\zeta'(v_0, s)}{\zeta(v_0, s)},\quad s\in \C.
\]
Then Lemma \ref{cs} implies that
\be\label{allc1}
   \zeta'(\cdot, s)=\gamma(s)\zeta(\cdot, s),
\ee
for all but countably many $s\in \C$. Therefore for all $v\in \pi^\nu$ which is $K$-finite, the continuity of the both sides of \eqref{allc1} on the variable $s\in \C$ implies that
\be\label{allc2}
   \zeta'(v, s)=\gamma(s)\zeta(v, s)\quad \textrm{for all $s\in \C$.}
\ee
Finally, the continuity of the both sides of the equality \eqref{allc2} on the variable $v\in \pi^\nu$ implies that
\be\label{allc3}
   \zeta'(\cdot, s)=\gamma(s)\zeta(\cdot, s)\quad \textrm{for all $s\in \C$.}
\ee
In particular, $\zeta'(\cdot, 0)$ is a scalar multiple of $\varphi=\zeta(\cdot, 0)$. This proves the uniqueness of $\varphi$, and finishes the proof of Theorem  \ref{test2}.

\subsection{The  non-vanishing hypothesis for L-functions for $\mathrm{GSpin}(2n+1)$}

Put $t_n':=n^2+n-1$. Then  \cite[Lemma 3.14]{Clo}
\[
 \oH^\nu:= \oH^{t_n'}(\sl_{2n}(\C),\mathrm{SO}(2n);
                           (F^\nu)^\vee\otimes \pi^\nu)\neq 0,
\]
and
\[
  \oH^{j}(\sl_{2n}(\C),\mathrm{SO}(2n);
                           (F^\nu)^\vee\otimes \pi^\nu)=0\qquad \textrm{for all $j>t_n'$}.
\]
The natural actions of the group $\oO(2n)$ on $\sl_{2n}(\C)$, $\mathrm{SO}(2n)$ and $(F^\nu)^\vee\otimes \pi^\nu$ induce a representation of $\oO(2n)/\SO(2n)=\{\pm 1\}$ on $\oH^\nu$. It turns out that \cite[Equation (3.2)]{Mah}
\be\label{ohnu}
  \oH^\nu\cong \C\oplus \sgn \qquad(\textrm{as representations of  the group $\{\pm 1\}$}),
\ee
where ``$\,\C\,$" stands for the trivial representation.

Recall from \eqref{defchi12} the character  $\chi=\chi_1\otimes \chi_2$ with $\chi_1\cdot \chi_2={\det}^{w}$.  Now we assume that
\be\label{rglp}
  (\chi_i)|_{(\GL_n(\R))^\circ}=({\det}^{w_i})|_{(\GL_n(\R))^\circ}, \qquad i=1,2,
\ee
for two integers $w_1,w_2$  such that
\be\label{w12}
  w_1+w_2=w \quad \textrm{and} \quad \nu_n\geq w_i\geq \nu_{n+1} \quad (i=1,2).
\ee
 Then the space
\[
 \oH_\chi:=\oH^{t_n'}(\sl_{2n}(\C)\cap(\gl_{n}(\C)\times \gl_n(\C)), \oO(n)\times_{\{\pm 1\}} \oO(n);
                           {\det}^{-w_1,-w_2}\cdot\chi)
\]
is one-dimensional, and naturally carries a representation of
\[
(\oO(n)\times \oO(n))
/(\oO(n)\times_{\{\pm 1\}} \oO(n))=\{\pm 1\}.
\]
Here  ${\det}^{-w_1,-w_2}$ denotes the character ${\det}^{-w_1}\otimes {\det}^{-w_2}$ of $\GL_n(\C)\times \GL_n(\C)$.
Using  \eqref{ohnu}, we conclude that
\[
  \dim \Hom_{\{\pm 1\}} (\oH^\nu, \oH_\chi)=1.
\]

As an instance of H. Schlichtkrull's
generalization of Cartan-Helgason Theorem (\cite[Theorem 7.2]{Sch}, see also \cite[Theorem 2.1]{Kna}), \eqref{w12} implies that
\[
 \dim \Hom_{\GL_n(\C)\times \GL_n(\C)}((F^\nu)^\vee,
  {\det}^{-w_1,-w_2})=1.
\]


\begin{thm}\label{nv2}
Assume that  \eqref{rglp} and \eqref{w12}  hold. Let $\varphi$ be as in Theorem \ref{test2}. Let $\psi$ be a nonzero element of $\Hom_{\GL_n(\C)\times \GL_n(\C)}((F^\nu)^\vee,
  {\det}^{-w_1,-w_2})$. Then by restriction of cohomology, the linear functional $\psi\otimes \varphi: (F^\nu)^\vee\otimes\pi^\nu\rightarrow  {\det}^{-w_1,-w_2}\cdot \chi$ induces a nonzero element of $\Hom_{\{\pm 1\}} (\oH^\nu, \oH_\chi)$.
\end{thm}

Theorem \ref{nv2} is a representation theoretic reformulation of the non-vanishing hypothesis in the study of  $p$-adic L-functions and critical values of L-functions for $\mathrm{GSpin}(2n+1)$, using the Langlands lift to $\GL(2n)$ and Shalika models (see \cite[assumption (A2)]{AG}, \cite[Section 6.6]{GR} and \cite{AS}).

Now we come to the proof of  Theorem \ref{nv2}. Denote by $\pi^\nu_\circ$ the Casselman-Wallach globalization of $\Pi_{\bar \q, L_\rc}^{\g,K^\circ}(\lambda^\nu)$. It is an irreducible representation of $(\GL_{2n}(\R))^\circ$ with minimal $K^\circ$-type $\tau^\nu$. Moreover, we have a natural inclusion
$\pi^\nu_\circ\subset \pi^\nu$, and
\[
  \oH^\nu_\circ:=\oH^{t_n'}(\sl_{2n}(\C),\mathrm{SO}(2n);
                           (F^\nu)^\vee\otimes  \pi^\nu_\circ)
\]
 is a one-dimensional subspace of the two-dimensional space
\[
  \oH^\nu:=\oH^{t_n'}(\sl_{2n}(\C),\mathrm{SO}(2n);
                           (F^\nu)^\vee\otimes \pi^\nu).
\]

Denote by $\varphi_\circ$ the restriction of $\varphi$ to $\pi^\nu_\circ$, which does not vanish on the minimal $K^\circ$-type $\tau^\nu$.
Let $\psi: (F^\nu)^\vee\rightarrow {\det}^{-w_1,-w_2}$ be as in Theorem \ref{nv2}. By restriction of cohomology, the linear functionals
\[
\psi\otimes \varphi_\circ: (F^\nu)^\vee\otimes\pi^\nu_\circ\rightarrow  {\det}^{-w_1,-w_2}\cdot \chi\quad\textrm{and}\quad \psi\otimes \varphi: (F^\nu)^\vee\otimes\pi^\nu\rightarrow  {\det}^{-w_1,-w_2}\cdot \chi
\]
 respectively induce linear functionals
\[
\eta_\circ:  \oH^\nu_\circ\rightarrow \oH_\chi\quad\textrm{and}\quad \eta:  \oH^\nu\rightarrow \oH_\chi.
\]
Applying Theorem \ref{nonvmod} of Appendix \ref{appendix} to the group $\SL_{2n}(\R)$, we know that $\eta_\circ$ is nonzero. Therefore $\eta$ is nonzero since it extends $\eta_\circ$. This finishes the proof of  Theorem \ref{nv2}.

\appendix

\section{Modular symbols at infinity}\label{appendix}

As mentioned in the Introduction, the Archimedean behaviors of modular symbols are captured by certain restriction maps of relative Lie algebra cohomologies.  We call these  restriction maps modular symbols at infinity.
In this appendix, we show (in Theorem \ref{nonvmod}) that the non-vanishing on the bottom layers implies the non-vanishing of certain  modular symbols at infinity.

\subsection{Unitary representations with nonzero cohomology}\label{unzero}

We first review some basic facts concerning unitary representations with nonzero cohomology.
Let $G$ be a real reductive group (as in Section \ref{genrality}). For simplicity, assume it is connected and  its center is compact.  Fix a Cartan  involution $\theta$ of $G$. Put $K:=G^\theta$, which is a maximal compact subgroup of $G$.

As usual, we use the corresponding lower case Gothic letter to indicate the complexified
Lie algebra of a Lie group.  Let $F$ be an irreducible finite-dimensional representation of $G$, and let $V$ be a unitarizable  irreducible $(\g,K)$-module such that the total relative Lie algebra cohomology
\[
   \oH^*(\g, K; F^\vee\otimes V)\neq 0.
\]
Then by \cite{VZ}, there is a $\theta$-stable parabolic subalgebra $\q$ of $\g$ with the following properties:
\begin{itemize}
  \item the representation $F^\n|_{L}$ is one-dimensional and unitarizable;
  \item $V\cong \Pi_S( F^\n\otimes \wedge^{\dim\n}\n)$.
  \end{itemize}
Here
\[
  \left\{
    \begin{array}{l}
      \n:=\textrm{the nilpotent radical of }\q\cap [\g,\g], \\
      L:=\RN_G(\q)=\RN_G(\bar \q),\\
       S:=\dim (\n\cap \k),\\
    \end{array}
  \right.
\]
and  $\Pi_S$ denotes the $S$-th left derived functor of the functor
\[
  \oR(\g,K)\otimes_{\oR(\bar \q, {L_\rc})} (\,\cdot\,)\qquad (L_\rc:=L\cap K)
\]
from the category of $(\bar \q,{L_\rc})$-modules to the category of $(\g,K)$-modules.

Put
 \[
\q_\mathrm c:=\q\cap \k,\quad\n_\mathrm c:=\n\cap \k,
\]
and define two vector spaces
\[
 \q_\mathrm n:=\q/\q_\mathrm c,\quad\n_\mathrm n:=\n/\n_\mathrm c.
\]
Note that $L$ and  $L_\rc$ are connected \cite[Lemma 5.10]{KV}. We introduce three irreducible representations $\tau_V$, $\tau_F$ and $\tau_\n$ of $K$ such that
\be\label{isotau}
  (\tau_V)^{\n_\mathrm c} \cong F^\n|_{L_\rc}\otimes \wedge^{\dim \n_\mathrm n}\n_\mathrm n,\quad (\tau_F)^{\n_\mathrm c} \cong F^\n|_{L_\rc},\quad\textrm{and}\quad (\tau_\n)^{\n_\mathrm c} \cong \wedge^{\dim \n_\mathrm n} \n_\mathrm n
\ee
as representations of $L_\rc$. Then $\tau_V$ occurs with multiplicity one in $V$ (this is the bottom layer of the cohomological induction, and is the unique minimal $K$-type of $V$, in the sense of Vogan); $\tau_F$ occurs with multiplicity one in $F$; and $\tau_\n$ occurs with multiplicity one in both $\wedge^{\dim \n_\mathrm n} \g/\k$ and $\wedge^{\dim \q_\mathrm n} \g/\k$.

\begin{lem}\label{apptau}
One has that
\[
  \dim \Hom_K(\tau_\n, \tau_F^\vee\otimes \tau_V)=1.
\]
\end{lem}
\begin{proof}
Note that $\tau_V$ is the Cartan product of $\tau_F$ and $\tau_\n$. (For details on Cartan products, see \cite{Ea} for example.) Hence
\[
 \dim \Hom_K(\tau_\n, \tau_F^\vee\otimes \tau_V)=\dim \Hom_K(\tau_\n\otimes \tau_F,\tau_V)=1.
\]
\end{proof}

\begin{thml}\label{unitaryc}
(a) For all $j\in \Z$,  $\oH^j(\g, K; F^\vee\otimes V)=\Hom_{K}(\wedge^j \g/\k, F^\vee\otimes V)$.

(b) The space $\oH^j(\g, K; F^\vee\otimes V)$ is zero unless $\dim \n_\mathrm n\leq j\leq \dim \q_\mathrm n$, and both $\oH^{\dim \n_\mathrm n}(\g, K; F^\vee\otimes V)$ and $\oH^{\dim \q_\mathrm n}(\g, K; F^\vee\otimes V)$ are one-dimensional.
\end{thml}
\begin{proof}
Part (a) is proved in \cite[Proposition 9.4.3]{W1}, and part (b) is implied by \cite[Theorem 9.6.6]{W1}.

\end{proof}

Theorem \ref{unitaryc} and Lemma \ref{apptau} imply that we have identifications
\be\label{idenhapp}
  \oH^j(\g, K; F^\vee\otimes V)=\Hom_K(\tau_\n, \tau_F^\vee\otimes \tau_V), \quad \textrm{for $j=\dim \n_\mathrm n$ or $\dim \q_\mathrm n$.}
\ee

\subsection{Non-vanishing of modular symbols at infinity}

Let $H$ be a $\theta$-stable closed subgroup of $G$ with finitely many connected components. Put $H_\rc:=H\cap K$. Let  $\chi_F$ and $\chi_V$ be two characters of $H$ such that
\be\label{chife}
 (\chi_F\cdot \chi_V)|_{(\h,H_\rc)}\cong \wedge^{\dim \h/\h_\rc} \h/\h_\rc,
\ee
where $\wedge^{\dim \h/\h_\rc} \h/\h_\rc$ carries the trivial representation of $\h$ and the adjoint representation of $H_\rc$. Suppose we have two nonzero elements
\[
  \lambda_F\in \Hom_H(F^\vee,\chi_F)\quad\textrm{and}\quad \lambda_V\in \Hom_{\h, H_\rc}(V,\chi_V).
\]
By restriction of cohomology, the functional $\lambda_F\otimes \lambda_V: F^\vee\otimes V\rightarrow \chi_F\otimes \chi_V$ induces a linear map
\be\label{msi}
  \oH^{\dim \h/\h_\rc}(\g,K; F^\vee\otimes V)\rightarrow \oH^{\dim \h/\h_\rc}(\h, H_\rc; \chi_F\otimes \chi_V)\cong \C.
\ee
The functional \eqref{msi} reflects the Archimedean behaviors of various types of modular symbols which are used in the arithmetic study of special values of L-functions. We call it a modular symbol at infinity.  In the literature, authors  concentrate on the cases when $\dim \h/\h_\rc=\dim \n_\mathrm n$ or $\dim \q_\mathrm n$. See \cite{AG, Har, GHL, KMS} for examples. The modular symbol is interesting only when the functional  \eqref{msi} is nonzero.

Recall the parabolic subalgebra $\q$ from Section \ref{unzero}.

\begin{thml}\label{nonvmod}
Assume that $\h+\q=\g$ and $\lambda_V$ does not vanish on the $K$-subrepresentation $\tau_V$ of $V$. If either
\be\label{hintq}
  \dim \h/\h_\rc=\dim \n_\mathrm n\quad \textrm{and}\quad \h\cap \q\subset \k,
\ee
or
\be\label{hintq2}
  \dim \h/\h_\rc=\dim \q_\mathrm n\quad \textrm{and}\quad \h\cap \n\subset \k,
\ee
then the linear functional \eqref{msi} is nonzero.
\end{thml}

We remark that the condition $\h\cap \q=\h\cap \bar \q$ implies that $\h\cap \n=0$ and hence $\h\cap\n\subset \k$. The condition \eqref{hintq} holds, for example, in the case of Rankin-Selberg convolutions for $\GL(n)\times \GL(n-1)$ (see \cite{Sun}).

The rest of this appendix is devoted to a proof of Theorem \ref{nonvmod}.

\begin{lem}\label{liaalgn}
If \eqref{hintq} or \eqref{hintq2} holds, then every nonzero element of the one-dimensional space $\Hom_K(\wedge^{\dim \h/\h_\rc}\g/\k, \tau_\n)$ does not vanish on the one-dimensional subspace $\wedge^{\dim \h/\h_\rc}\h/\h_\rc$ of $\wedge^{\dim \h/\h_\rc}\g/\k$.
\end{lem}
\begin{proof}
We give a proof under the assumption that \eqref{hintq} holds, which is similar to that of \cite[Lemma 2.10]{Sun}. The same proof works when \eqref{hintq2} holds.
Fix a $K$-invariant Hermitian inner product $\la\,,\,\ra$ on $\g/\k$.
 It induces a $K$-invariant  Hermitian inner product $\la\,,\,\ra_\wedge$ on $\wedge^{\dim \h/\h_\rc}\g/\k$.

View $\tau_\n$ as a $K$-submodule of $\wedge^{\dim \h/\h_\rc}\g/\k$. It is generated by the one-dimensional space $\wedge^{\dim \h/\h_\rc}\n/\n_\mathrm c$.
Note that every nonzero element of
\[
  \Hom_K(\wedge^{\dim \h/\h_\rc}\g/\k, \tau_\n)
   \]
   is a scalar multiple of the orthogonal projection $\wedge^{\dim \h/\h_\rc}\g/\k\rightarrow \tau_\n$. Therefore in order the prove the lemma,
it suffices to show that the one-dimensional spaces
$\wedge^{\dim \h/\h_\rc}\h/\h_\rc$ and $\wedge^{\dim \h/\h_\rc}\n/\n_\mathrm c$
are not perpendicular to each other under the form
$\la\,,\,\ra_{\wedge}$. This is equivalent to saying that the pairing
\[
  \la\,,\,\ra: \h/\h_\rc\times \n/\n_\mathrm c\rightarrow \C
\]
is non-degenerate.
Note that
\[
   \{x\in \g/\k\mid \la x, \n/\n_\mathrm c\ra=0\}=\bar \q/\bar \q_\mathrm c,
\]
and by \eqref{hintq}, $\h/\h_\rc\cap \bar \q/\bar \q_\mathrm c=\{0\}$. This proves the lemma.
\end{proof}

\begin{lem}\label{cartanp}
Let $u$ be a nonzero element of $\tau_\n$ and $v$ a nonzero element of $\tau_F$. Then every nonzero element of $\Hom_K(\tau_\n\otimes \tau_F, \tau_V)$ does not vanish on $u\otimes v$.
\end{lem}
\begin{proof}
The lemma holds since $\tau_V$ is the Cartan product of $\tau_\n$ and $\tau_F$ (\cf \cite[Section 2.1]{Ya}).
\end{proof}



\begin{lem}\label{aa6}
If $\h+\q=\g$, then
\be\label{homcl1}
\dim \Hom_{H_\rc}(\tau_V, \chi_V)\leq 1.
\ee
\end{lem}
\begin{proof}
Recall from \eqref{isotau} that $\dim \tau_V^{\n_\mathrm c}=1$.  Note that $\h+\q=\g$ implies $\h_\rc+\q_\mathrm c=\k$. Then we have that
\be\label{taueg}
  \tau_V=\oU(\k). \tau_V^{\n_\mathrm c}=\oU(\h_\rc).(\oU(\q_\mathrm c).\tau_V^{\n_\mathrm c})=\oU(\h_\rc).\tau_V^{\n_\mathrm c}
\ee
Therefore every element of $\Hom_{H_\rc}(\tau_V, \chi_V)$ is determined by its restriction to the one-dimensional space $\tau_V^{\n_\mathrm c}$. Hence \eqref{homcl1} holds.
\end{proof}

\begin{lem}\label{aaa6}
Assume that $\h+\q=\g$ and $\lambda_V$ does not vanish on $\tau_V$. Then $(\chi_V)|_{H_\rc}$ occurs with multiplicity one in $(\tau_V)|_{H_\rc}$, and $\lambda_V$ does not vanish on $(\chi_V)|_{H_\rc}\subset (\tau_V)|_{H_\rc}$.
\end{lem}
\begin{proof}
This is obviously implied by  Lemma \ref{aa6}.
\end{proof}

\begin{lem}\label{fnonv}
View $\tau_F^\vee$ as a $K$-submodule of $F^\vee$. If $\h+\q=\g$, then $\lambda_F$ does not vanish on $\tau_F^\vee$.
\end{lem}
\begin{proof}
Similar to \eqref{taueg}, we have that
\[
  F^\vee=\oU(\h). (F^\vee)^{\bar \n}.
\]
Since $\tau_F^\vee$ is generated by $(F^\vee)^{\bar \n}$, the lemma follows.
\end{proof}

We are now ready to prove Theorem \ref{nonvmod}.  By \eqref{idenhapp}, the map \eqref{msi} is identified with the linear map
\be\label{homkfe1}
  \Hom_K(\tau_\n, \tau_F^\vee\otimes \tau_V)\rightarrow \Hom_C(\wedge^{\dim \h/\h_\rc}\h/\h_\rc,\chi_F\otimes \chi_V)
\ee
which is induce by the linear functional
\[
 (\lambda_F)|_{\tau_F^\vee}\otimes (\lambda_V)|_{\tau_V}:\tau_F^\vee\otimes \tau_V \rightarrow \chi_F\otimes \chi_V
\]
and the restriction to $\wedge^{\dim \h/\h_\rc}\h/\h_\rc$ of the $K$-equivaraint projection map
\[
  p_\n: \wedge^{\dim \h/\h_\rc}\g/\k\rightarrow \tau_\n.
\]
The map \eqref{homkfe1} is identified with the obvious  linear map
\be\label{homkfe2}
  \Hom_K(\tau_\n\otimes \tau_F, \tau_V)\rightarrow \Hom_C(\wedge^{\dim \h/\h_\rc}\h/\h_\rc\otimes \chi_F^\vee, \chi_V).
\ee
The non-vanishing of the map \eqref{homkfe2} is equivalent to saying that the following composition map is nonzero:
\be \label{comw}
  \wedge^{\dim \h/\h_\rc}\h/\h_\rc\otimes \chi_F^\vee\rightarrow \tau_\n\otimes \tau_F\rightarrow \tau_V\stackrel{(\lambda_V)|_{\tau_V}}\longrightarrow \chi_V.
\ee
Here the first arrow is the tensor product of $(p_\n)|_{\wedge^{\dim \h/\h_\rc}\h/\h_\rc}$ and  the transpose of $(\lambda_F)|_{\tau_F^\vee}$, and the second arrow is a nonzero $K$-equivariant linear map. The first arrow is nonzero by Lemma \ref{liaalgn} and Lemma \ref{fnonv}. By Lemma \ref{cartanp}, the composition of the first two arrows of \eqref{comw} is nonzero, and hence by \eqref{chife}, its image is a one-dimensional submodule of $(\tau_V)|_{H_\rc}$ which is isomorphic to $(\chi_V)|_{H_\rc}$. Finally, by Lemma \ref{aaa6}, the composition of \eqref{comw} is nonzero. This finishes the proof of Theorem \ref{nonvmod}.


\begin{thebibliography}{99}
\bibitem[AGS]{AGS}
A. Aizenbud, D. Gourevitch and E. Sayag, \textit{Generalized
Harish-Chandra descent, Gelfand pairs, and an Archimedean analog of
Jacquet-Rallis's theorem}, Duke Math. J. 149 (2009), 509-567.



\bibitem[AOS]{AOS}
A. Aizenbud, O. Offen and E. Sayag, \textit{Disjoint pairs for $\GL(n,\R)$ and $\GL(n,\C)$}, C. R. Math. Acad. Sci. Paris 350 (2012), 9-11.


\bibitem[AiS]{AiS}
A. Aizenbud and E. Sayag, \textit{Invariant distributions on non-distinguished nilpotent orbits with application to the Gelfand property of $(\GL(2n, \R), \Sp(2n, \R))$}, J. Lie Theory 22 (2012)
137-153.

\bibitem[AsS]{AS}
M. Asgari and F. Shahidi, \textit{Generic transfer for general spin groups}, Duke Math. J. 132 (2006), 137-190.

\bibitem[AG]{AG} A. Ash and D. Ginzburg, \textit{p-adic L-functions for $\GL(2n)$}, Invent. Math.
116 (1994), 27-73.

\bibitem[Ban]{Ban}
E.P. van den Ban, \textit{The principal series for a reductive
symmetric space. I. $H$-fixed distribution vectors}, Ann. Sci.
\'{e}cole Norm. Sup. (4) 21 (1988), no. 3, 359-412.


\bibitem[BaD]{BaD}
E.P. van den Ban and P. Delorme, \textit{Quelques proprietes des
representations spheriques pour les espaces symetriques reductifs},
J. Funct. Anal. 80(1988), 284-307.



\bibitem[Bar]{Bar}
E. Baruch, \textit{A proof of Kirillov's conjecture}, Ann. of Math. (2) 158 (2003), no. 1, 207-252.


\bibitem[BK]{BK}
J. Bernstein and B. Krotz, \textit{Smooth Frechet globalizations of
Harish-Chandra modules}, Israel Journal of Mathematics, 199 (2014), 45-111.

\bibitem[BW]{BW}
A. Borel and N.  Wallach, \textit{Continuous cohomology, discrete subgroups, and
representations of reductive groups}, Second edition. Mathematical Surveys and Monographs, 67. American Mathematical Society, Providence,
RI, 2000.

\bibitem[BrD]{BrD}
J.-L. Brylinski and P. Delorme, \textit{Vecteurs distributions
$H$-invariants pour les s\'{e}ries principales
g\'{e}n\'{e}ralis\'{e}es d'espaces sym\'{e}triques r\'{e}ductifs et
prolongement m\'{e}romorphe d'int\'{e}grales d'Eisenstein}, Invent. Math.
109 (1992), no. 3, 619-664.



\bibitem[Cas]{Cass}
W. Casselman, \textit{Canonical extensions of Harish-Chandra moudles
to representations of $G$}, Can. Jour. Math. 41, (1989), 385-438.

\bibitem[CS]{CS}
F. Chen and B. Sun, \textit{Uniqueness of twisted linear periods and twisted Shalika periods}, 	arXiv:1703.06238

\bibitem[Clo]{Clo}
L. Clozel, \textit{Motifs et formes automorphes: applications du
principe de fonctorialit\'{e}}.(French) [Motives and automorphic
forms: applications of the functoriality principle] Automorphic
forms, Shimura varieties, and L-functions, Vol. I (Ann Arbor, MI,
1988), 77-159, Perspect. Math., 10, Academic Press, Boston, MA,
1990.


\bibitem[DV]{DV}
M. Duflo and M. Vergne, \textit{Sur les functeurs de Zuckerman}, C.
R. Acad. Sci. Paris 304 (1987), 467-469.

\bibitem[Ea]{Ea}
M. Eastwood, \textit{The Cartan product}, Bull. Belg. Math. Soc. Simon Stevin,
Volume 11, Number 5 (2005), 641-651.


\bibitem[FJ]{FJ}
M. Flensted-Jensen, \textit{Discrete series for semisimple symmetric
spaces}, Ann. of Math. (2) 111 (1980), no. 2, 253-311.

\bibitem[GOSS]{GOSS}
D. Goureviteh, O. Offen, S. Sahi and E. Sayag, \textit{Existence of Klyachko models for $\GL(n,\R)$ and $\GL(n,\C)$}, J. Funct. Analysis 262 (2012), no. 8, 3465-3664.

\bibitem[GSS]{GSS}
D. Gourevitch, S. Sahi and E. Sayag, \textit{Invariant functionals
on Speh representations}, Transformation groups 20 (2015), no. 4, 1023-1042.


\bibitem[GHL]{GHL}
H. Grobner, M. Harris and E. Lapid, \textit{Whittaker regulators and non-critical values of the Asai L-function}, Contemporary Mathematics, 664 (2016) 119-134.


\bibitem[GR]{GR}
H. Grobner, A. Raghuram (with an appendix by W.T. Gan), \textit{On the arithmetic of Shalika models
and the critical values of L-functions for $\GL(2n)$}, Amer. J. Math. 136 (2014), 675-728

\bibitem[Har]{Har}
M. Harris, \textit{Occult period invariants and critical values of
the degree four L-function of $GSp(4)$}, in: Contributions to
automorphic forms, geometry, and number theory, eds. H. Hida, D.
Ramakrishnan, F. Shahidi, Johns Hopkins Univ. Press, (Baltimore, MD,
2004), 331-354.

\bibitem[Hel]{Hel}
S. Helgason, \textit{Groups and Geometric Analysis}, Amer. Math.
Soc., Providence, R.I. 2000.




\bibitem[KMS]{KMS}
D. Kazhdan, B. Mazur and C.-G. Schmidt, \textit{Relative modular
symbols and Rankin-Selberg convolutions}, J. reine angew. Math. 519
(2000), 97-141.

\bibitem[Kna]{Kna}
A. Knapp, \textit{Branching theorems for compact symmetric spaces},
Represent. Theory 5 (2001), 404-436.

\bibitem[KV]{KV}
A. Knapp and D. Vogan, \textit{Cohomological induction and unitary
Representations}, 1995, Princeton University Press, Princeton, NJ.

\bibitem[Mah]{Mah}
J. Mahnkopf, \textit{Cohomology of arithmetic groups, parabolic
subgroups and the special values of L-functions of $\GL_n$}, J.
Inst. Math. Jussieu., 4(4), 553-637 (2005).


\bibitem[Ola]{Ola}
G. \'{O}lafsson, \textit{Fourier and Poisson transformation
associated to a semisimple symmetric space}, Invent. Math. 90
(1987), no. 3, 605-629.


\bibitem[Osh]{Osh}
T. Oshima, \textit{Poisson transformations on affine symmetric spaces}, Proc. Japan Acad. Ser. A Math. Sci. 55 (1979), no. 9, 323-327.

\bibitem[OM]{OM}
T. Oshima and T. Matsuki, \textit{A description of discrete series
for semisimple symmetric spaces}, Group representations and systems
of differential equations (Tokyo, 1982), 331-390, Adv. Stud. Pure
Math., 4, North-Holland, Amsterdam, 1984.









\bibitem[Sch1]{Sch1}
H. Schlichtkrull, \textit{A series of unitary irreducible
representations induced from a symmetric subgroup of a semisimple
Lie group}, Invent. Math. 68 (1982), no. 3, 497-516.

\bibitem[Sch2]{Sch}
H. Schlichtkrull, \textit{One-dimensional $K$-types in finite
dimensional representations of semisimple Lie groups: A
generalization of Helgason's theorem}, Math. Scand. 54 (1994),
279-294.


\bibitem[Sp]{Sp}
B. Speh, \textit{Unitary representations of $\GL(n, \R)$with nontrivial $(\g, K)$-cohomology}, Invent. Math. 71 (1983), 443-465.


\bibitem[Sun]{Sun}
B. Sun, \textit{The nonvanishing hypothesis at infinity for Rankin-Selberg convolutions}, J. Amer. Math. Soc.
30 (2017), 1-25.


\bibitem[Vog1]{Vog0}
D. Vogan, \textit{The unitary dual of $\GL(n)$ over an archimedian field}, Invent. Math., 83 (1986), pp. 449-505.



\bibitem[Vog2]{Vog2}
D. Vogan, \textit{Irreducibility of discrete series representations
for semisimple symmetric spaces}, Representations of Lie groups,
Kyoto, Hiroshima, 1986, 191-221, Adv. Stud. Pure Math., 14, Academic
Press, Boston, MA, 1988.

\bibitem[VZ]{VZ}
D. Vogan and G. Zuckerman, \textit{Unitary representations with
nonzero cohomology}, Compositio Math. 53 (1984)), 51-90.

\bibitem[Wa1]{W1}
N. Wallach, \textit{Real Reductive Groups I}, Academic Press, San
Diego, 1988.

\bibitem[Wa2]{W2}
N. Wallach, \textit{Real Reductive Groups II}, Academic Press, San
Diego, 1992.

\bibitem[Ya]{Ya}
O. Yacobi, \textit{An analysis of the multiplicity spaces in branching of symplectic groups}, Selecta Math N.S., 16 (2010), 819-855.

\end{thebibliography}
\end{document}